\newtheorem{lem}{Lemma}[section]
\newtheorem{Theo}{Theorem}[section]
\newtheorem{Coro}{Corollary}[section]
\newtheorem{Prop}{Proposition}[section]
\newtheorem{rem}{Remark}[section]
\newtheorem{thm}[Theo]{Theorem}
\newtheorem{exm}{Example}[section]
\theoremstyle{definition} \rm
\begin{document}
%\def\sect#1{\section*{\leftline{\Large\zihao{-4}\bf\heiti #1}}}
%\def{zm}{\noindent{\heiti 证明\ \ \ }}
%\def{endzm}{\hfill $\Box$ \bigskip}
%\maketitle
%\setcounter{secnumdepth}{-1}
%\begin{CJK*}{GBK}{song}

\title{\bf The existence of the graphs that have exactly two main eigenvalues}
\author{
{\ Lin Chen }
\\[2mm]
\footnotesize College of Medical Engineering and Technology,
\footnotesize Xinjiang Medical University, Urumqi 830011, P. R.
China\\
\footnotesize Email: yuehuacl@163.com}
\date{ }
\maketitle

\vspace{-10mm}

\begin{center}
\author{
{\   Qiongxiang
Huang\renewcommand{\thefootnote}{\fnsymbol{footnote}}\footnotemark[1]
}
\\[2mm]
\footnotesize College of Mathematics and System Science,
\footnotesize Xinjiang  University, Urumqi 830046, P. R. China,\\
 \footnotesize Email: huangqx@xju.edu.cn
}
\end{center}

\def\thefootnote{}

\footnotetext {Supported by National Natural Science Foundation of China (Grant Nos. 11671344, 11261059 and 11531011).}

\renewcommand{\thefootnote}{\fnsymbol{footnote}}
\footnotetext[1]{Corresponging author }

\maketitle
\begin{abstract}
An eigenvalue of a graph $G$ is called a main eigenvalue if it has
an eigenvector the sum of whose entries is not equal to zero. It is
well known  that a graph $G$ has exactly two main eigenvalues if and
only if there exists a unique pair of integers $a$ and $b$  such
that $\sum_{u\in N(v)}d(u)=ad(v)+b$  for every vertex $v\in
V(G)$. We collect  such  connected graph $G$ in the set
$\mathscr{G}(a,b)$. In this paper, we mainly focus to the existence of such $a$ and $b$, and give the necessary and sufficient condition for $\mathscr{G}(a,b)\neq\emptyset$.
In addition, we give the bound for the vertex degrees of $G\in\mathscr{G}(a,b)$   and use the bound to characterize the graphs in $\mathscr{G}(a,b)$ for some feasible
pairs $(a,b)$.
\\

\hspace{-6mm}{\it AMS classification:} 05C50\\

\hspace{-6mm}{\it Keywords}: Adjacency matrix; Main eigenvalue;
Equitable Partition;   Bound.
\end{abstract}

\section{\large Introduction}\label{section-1}

Let $G$ be a simple undirected  graph with $n$ vertices, $m$ edges and
 adjacency matrix
$A=A(G)=(a_{ij})_{n\times n}$.  The eigenvalues  of $G$ are
those of $A$. An eigenvalue
 of a graph $G$ is called  \textit{main eigenvalue}
 if it has an eigenvector the sum of
whose entries is not equal to zero. The Perron-Frobenius theorem
implies that the largest eigenvalue  of $G$ is always main. It is well known that a graph is regular if and only if it has
exactly one main eigenvalue. A long standing problem posed by
Cvetkovi\'{c} (see \cite{Cve}) is to
 characterize graphs with exactly $k(k\geq2)$ main
 eigenvalues. There are some literatures on main eigenvalues and one can refer to  a
survey in \cite{Rowlinson}.  Let
$\mathbf{j}$ denote the all-one vector.
In \cite{Hagos}, Hagos derived a simple criterion for a graph to
 have exactly two main eigenvalues.\\
\textbf{Theorem [A]} (\cite{Hagos}){\bf .}
  A graph $G$ has exactly two main
  eigenvalues if and only if there
exists a unique pair of rational numbers $a$ and $b$ such that $A^2\mathbf{j}-aA\mathbf{j}-b\mathbf{j}=\mathbf{0}$.

$A^2\mathbf{j}-aA\mathbf{j}-b\mathbf{j}=\mathbf{0}$ can be
rewritten by
\begin{equation}\label{equ-1}\sum_{u\in N(v)}d(u)=ad(v)+b \ \mbox{ for any $v\in V(G)$.}\end{equation}
In fact,  Hou and  Tian in \cite{Hou1}
showed that   $a$ and $b$ are integers.  In terms of the criterion (\ref{equ-1}),
the connected graphs with $c(G)=n-m+1\le 3$
 that have exactly two main eigenvalues are characterized  in
 \cite{Hou1,Hou2,Shi,Hu,Hou3,Tang1,Fan}.

Let $\mathscr{G}(a,b)$ be the set of  connected graphs having exactly two main eigenvalues and satisfying (\ref{equ-1}). For $G\in\mathscr{G}(a,b)$, let
$\lambda_1,\lambda_2$ are the  main
eigenvalues of $G$. The \textit{main polynomial} of $G$ is defined
by $m_G(x)=(x-\lambda_1)(x-\lambda_2)$.  Teranishi (see
\cite{Rowlinson, Teranishi}) showed that
$f(A)\mathbf{j}=\mathbf{0}$ if and only if $m_G(x)$ divides
$f(x)$, where $f(x)\in \mathbb{R}[x]$. From Theorem A, we know that $A^2\mathbf{j}-a\mathbf{j}-b\mathbf{j}=\mathbf{0}$.
It implies that $m_G(x)|x^2-ax-b$ and so $m_G(x)=x^2-ax-b$. Hence
\begin{equation}\label{equ-2}\lambda_{1}=\frac{a+ \sqrt{a^2+4b}}{2}
\ \ \mbox{and}\ \
\lambda_{2}=\frac{a-\sqrt{a^2+4b}}{2}.\end{equation}

From (\ref{equ-2}) we have
$a^2+4b>0$ and $a=\lambda_1+\lambda_2\geq0$. Set $\mathscr{F}=\{(a,b)\mid a\ge 0 \mbox{ and } a^2+4b>0\}$. Thus, if  $\mathscr{G}(a,b)\not=\emptyset$ then $(a,b)\in\mathscr{F}$. Conversely, we ask wether $(a,b)\in\mathscr{F}$ implies  $ \mathscr{G}(a,b)\not=\emptyset$?

 In Section \ref{section-2}, we give
a necessary and sufficient condition for  $ \mathscr{G}(a,b)\not=\emptyset$.
In Section \ref{section-3}, we obtain an upper and lower bounds for the degrees of vertices in
$G\in\mathscr{G}(a,b)$ and characterize the graphs for which the lower
bound is attained. In Section \ref{section-4}, we characterize the graphs in $\mathscr{G}(a,b)$ for some feasible
pairs $(a,b)$.

\section{\large
The necessary and sufficient condition for  $ \mathscr{G}(a,b)\not=\emptyset$  }\label{section-2}

 Denote by $\delta(G)$, $\Delta(G)$ and
$\overline{d}(G)$ the smallest, largest and mean degrees of $G$,
respectively. First we cite useful lemma due to Rowlinson in \cite{Rowlinson}.

\begin{lem}[\cite{Rowlinson}]\label{E-1}
 Let $G$ be a connected graph with exactly two main eigenvalues
 $\lambda_1$ and $\lambda_2$, where $\lambda_1>\lambda_2$. Then
 $\lambda_2<\delta(G)<\overline{d}(G)<\lambda_1<\Delta(G)$.
\end{lem}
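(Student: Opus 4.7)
The plan is to exploit non-regularity — forced by $G$ having two main eigenvalues rather than one — together with equation (\ref{equ-1}) applied at a vertex of extremal degree. Indeed, any connected regular graph has a single main eigenvalue, so $G$ cannot be regular. This observation drives the three ``easy'' inequalities in the chain. The Rayleigh quotient gives $\lambda_1 \geq \mathbf{j}^T A \mathbf{j}/\mathbf{j}^T\mathbf{j} = \overline{d}(G)$, with equality forcing $\mathbf{j}$ to be a $\lambda_1$-eigenvector and hence $G$ regular; so $\lambda_1 > \overline{d}(G)$. The classical bound $\lambda_1 \leq \Delta(G)$, whose equality case for connected graphs is again regularity, yields $\lambda_1 < \Delta(G)$. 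And $\delta(G) < \overline{d}(G)$ is immediate since not all degrees coincide.

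The core content is the left-most inequality $\lambda_2 < \delta(G)$. Writing the main polynomial as $m_G(x) = x^2 - ax - b = (x-\lambda_1)(x-\lambda_2)$, I would pick a vertex $v_0$ with $d(v_0) = \delta = \delta(G)$ and bound the left-hand side of (\ref{equ-1}) below by $\delta^2$, since $v_0$ has $\delta$ neighbors each of degree at least $\delta$. This yields
\begin{equation*}
m_G(\delta) \;=\; \delta^2 - a\delta - b \;\leq\; 0,
\end{equation*}
so $\lambda_2 \leq \delta(G) \leq \lambda_1$; combined with the strict bound $\delta(G) < \lambda_1$ already established, this gives $\lambda_2 \leq \delta(G) < \lambda_1$.

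The main obstacle is promoting $\lambda_2 \leq \delta(G)$ to a strict inequality, and I expect to handle it by a propagation argument. Suppose $\lambda_2 = \delta$. Then equality holds in the bound above, so every neighbor of $v_0$ has degree exactly $\delta$. Applied to any such neighbor $u$, equation (\ref{equ-1}) reads $\sum_{w \in N(u)} d(w) = a\delta + b = \delta^2$, and the same lower-bound argument forces each neighbor of $u$ to have degree $\delta$ as well. Iterating through the connected graph $G$, every vertex acquires degree $\delta$, contradicting non-regularity; thus $\lambda_2 < \delta(G)$ and the full chain is obtained.
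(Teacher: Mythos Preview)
The paper does not supply a proof of this lemma; it is quoted verbatim from Rowlinson's survey \cite{Rowlinson} and used as a black box. Your argument is correct and essentially reconstructs what one expects the original proof to be: the three inequalities $\delta(G)<\overline{d}(G)$, $\overline{d}(G)<\lambda_1$, and $\lambda_1<\Delta(G)$ follow from non-regularity together with the standard Rayleigh/Perron bounds, and the remaining inequality $\lambda_2<\delta(G)$ is obtained by evaluating the main polynomial at $\delta(G)$ via (\ref{equ-1}) and ruling out equality by the propagation-to-regularity argument. Since there is no in-paper proof to compare against, nothing further is needed; your write-up could stand as the missing proof.
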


Now we put
$$\mathscr{F}^*=\{(a,b)\not=(0,1)\mid a\ge 0 \mbox{ and } a^2+4b\ge4\},$$ which we call the feasible set of the pair of integers $a$ and
$b$. The following result gives the necessary  condition for   $ \mathscr{G}(a,b)\not=\emptyset$.

\begin{lem}\label{E-3}
If $ \mathscr{G}(a,b)\not=\emptyset$ then $(a,b)\in \mathscr{F}^*$.
\end{lem}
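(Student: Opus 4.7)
The plan is to derive both conditions of $\mathscr{F}^*$ from the strict inequality $\lambda_2<\delta(G)<\lambda_1$ supplied by Lemma \ref{E-1}, exploiting the fact that $\delta(G)$ is always an integer while $\lambda_1,\lambda_2$ are controlled by the integer pair $(a,b)$ via (\ref{equ-2}). The two key observations will be: (i) the quantity $\lambda_1-\lambda_2=\sqrt{a^2+4b}$ must leave room for an integer in the open interval $(\lambda_2,\lambda_1)$, and (ii) when $\lambda_1=1$ the lemma fails because any connected graph on at least two vertices has $\overline{d}(G)\ge 1$.

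First I would establish $a^2+4b\ge 4$. Since $a$ and $b$ are integers and $a^2+4b\equiv a^2\pmod 4$, the only value of $a^2+4b$ strictly between $0$ and $4$ is $1$, which forces $a$ to be odd and makes $\lambda_1=(a+1)/2$, $\lambda_2=(a-1)/2$ two consecutive integers. But Lemma \ref{E-1} requires the integer $\delta(G)$ to satisfy $\lambda_2<\delta(G)<\lambda_1$, which is impossible when the two endpoints are consecutive integers. Hence $0<a^2+4b<4$ cannot occur, so $a^2+4b\ge 4$.

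Next I would dispatch the only remaining excluded pair, $(a,b)=(0,1)$, for which $\lambda_1=1$. Since $G\in\mathscr{G}(a,b)$ is connected with $n\ge 2$ vertices (a single vertex is regular and has only one main eigenvalue), the minimum degree satisfies $\delta(G)\ge 1$, so $\overline{d}(G)\ge\delta(G)\ge 1$. Lemma \ref{E-1} then gives $\lambda_1>\overline{d}(G)\ge 1$, contradicting $\lambda_1=1$. Combining with the previous step yields $(a,b)\in\mathscr{F}^*$.

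I do not anticipate a genuine obstacle: the heart of the argument is the mod-$4$ observation pinning $a^2+4b$ to the single value $1$ in the forbidden range, after which the integrality of $\delta(G)$ immediately contradicts Lemma \ref{E-1}. The exceptional pair $(0,1)$ is then ruled out by a one-line application of the same lemma.
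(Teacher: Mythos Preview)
Your proof is correct and follows the same overall strategy as the paper's---use the integrality of $\delta(G)$ together with Lemma~\ref{E-1}---but the execution differs in two places and is in fact cleaner.

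For the range $0<a^2+4b<4$, the paper treats the three values $1,2,3$ separately: the value $1$ is handled exactly as you do (consecutive integer eigenvalues leave no room for $\delta(G)$), while $2$ and $3$ are each shown to be impossible by ad hoc parity/arithmetic arguments. Your single observation $a^2+4b\equiv a^2\equiv 0\text{ or }1\pmod 4$ collapses all three cases to the one case $a^2+4b=1$, which is neater.

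For $(a,b)=(0,1)$, the paper instead applies the defining relation~(\ref{equ-1}) at a vertex of maximum degree to get $\delta(G)\Delta(G)\le 1$, forcing $G=K_2$. Your route via $\overline d(G)\ge 1>\lambda_1-0=1$ contradicting Lemma~\ref{E-1} is equally short; either argument works. One small point: you should state explicitly (as the paper does) that $a\ge 0$ and $a^2+4b>0$ are already known from the discussion around~(\ref{equ-2}), since $a\ge 0$ is part of the definition of $\mathscr{F}^*$ and you do not otherwise address it.
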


\begin{proof}
Suppose that $ \mathscr{G}(a,b)\not=\emptyset$. From the
above arguments in Section \ref{section-1}, we know that $a\geq 0$
and $a^2+4b> 0$, that is,  $(a,b)\in \mathscr{F}$.

If $a^2+4b=1$ then $a^2=1-4b$ is an odd, and so is $a$. From (\ref{equ-2}), we claim that   $\lambda_{1}=\frac{a+1}{2}$ and
$\lambda_{2}=\frac{a-1}{2}$ are integers.  From Lemma \ref{E-1}, we
have $\frac{a-1}{2}=\lambda_2<\delta(G)<\lambda_1=\frac{a+1}{2}$,
 a contradiction.  If $a^2+4b=2$
then $a^2=2(1-2b)$ is an even, and so is $a$. It follows that
$a^2$ is  a multiple of $4$. This is impossible since $1-2b$ is an
odd. If $a^2+4b=3$ then $a^2=4(1-b)-1$ is an odd, and so is $a$.
Let $a=2k+1$($k\geq 0$), we have $4k^2+4k+1=4(1-b)-1$, which gives that
$k^2+k+b=\frac{1}{2}$,  a contradiction.
Therefore, $a^2+4b\geq 4$.

At last, suppose that $(a,b)=(0,1)$. Let $v\in V(G)$ with $d(v)=\Delta(G)$, and
applying (\ref{equ-1}) to $v$, we have
 $\delta(G)\Delta(G)\leq\sum_{u\in N(v)}d(u)=ad(v)+b=1$, which implies that $G=K_2$,
 and so $G$ has exactly one main eigenvalue, this is a contradiction.

Summarizing the above arguments, we known that $(a,b)\in \mathscr{F}^*$. It completes this proof.
\end{proof}

For a graph $G$, a vertex partition $\pi$: $V(G)=C_1\cup
C_2\cup\cdots\cup C_r$  is said to be \textit{equitable} if, for any
$u\in C_i$, $|C_j\cap N_G(u)|=c_{ij}$  is a constant whenever $1\le
i,j\le r$. Such a partition $\pi$ is called the
\textit{$r$-equitable partition} with cells $C_1,C_2,...,C_r$ and
parameters $(c_{11},c_{12},...,c_{1r};
c_{21},c_{22},...,c_{2r};...;$ $ c_{r1},c_{r2},...,c_{rr})$. An
equitable
 partition  $\pi$ leads to a \textit{quotient
graph}(or \textit{divisor}) which is denoted by $G/\pi$. $G/\pi$ is
a directed graph with  vertex set $\{C_1, C_2,...,C_r\}$, and there
are exactly $c_{ij}$ arcs from  $C_i$ to  $C_j$. Therefore, $G/\pi$
has adjacency matrix $A(G/\pi)=(c_{ij})_{r\times r}$.
 Rowlinson
in \cite{Rowlinson} gave the following result.

\begin{Prop}[\cite{Rowlinson}]\label{E-4}
The main eigenvalues of $G$ are eigenvalues of every divisor of $G$.
\end{Prop}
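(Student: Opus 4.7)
The plan is to use the characteristic matrix of the equitable partition $\pi$ to transport an eigenvector of $A=A(G)$ corresponding to a main eigenvalue $\lambda$ into an eigenvector of the divisor matrix $B=A(G/\pi)$, and then use the main-eigenvalue hypothesis to guarantee that this transport does not kill the vector.

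First I would introduce the $n\times r$ characteristic matrix $P$ of $\pi$, defined by $P_{v,i}=1$ if $v\in C_i$ and $P_{v,i}=0$ otherwise. The defining property of an equitable partition, namely that every vertex in $C_i$ has exactly $c_{ij}=|C_j\cap N_G(u)|$ neighbours in $C_j$, translates at once into the matrix identity $AP=PB$. This is the one structural input I would need; it is standard, but I would record it explicitly because the whole argument hinges on it.

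Next, let $\lambda$ be a main eigenvalue of $G$ and pick an eigenvector $x\in\mathbb{R}^n$ with $Ax=\lambda x$ and $\mathbf{j}^{T}x\neq 0$. Transposing $AP=PB$ gives $P^{T}A=B^{T}P^{T}$, whence
$$B^{T}(P^{T}x)=P^{T}(Ax)=\lambda\,(P^{T}x).$$
So $P^{T}x$ is a candidate eigenvector of $B^{T}$ for the eigenvalue $\lambda$, and it remains only to show that $P^{T}x\neq\mathbf{0}$. This is precisely the step where the hypothesis that $\lambda$ is \emph{main} enters: since each vertex of $G$ lies in exactly one cell, $\mathbf{j}=P\mathbf{1}_{r}$, and therefore the coordinate sum $\mathbf{1}_{r}^{T}(P^{T}x)=\mathbf{j}^{T}x\neq 0$. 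In particular $P^{T}x\neq\mathbf{0}$, so $\lambda$ is an eigenvalue of $B^{T}$, and hence of $B=A(G/\pi)$.

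There is no real obstacle here beyond verifying $AP=PB$ from the definition of equitable partition; the rest is a one-line functorial transport of the eigenvector, and the main-eigenvalue assumption is exactly what is needed to exclude the degenerate possibility $P^{T}x=\mathbf{0}$. The only mild care required is to remember that we pass through $B^{T}$ rather than $B$ (since $B$ need not be symmetric as the quotient graph is directed), which is harmless because $B$ and $B^{T}$ share the same spectrum. Since $\pi$ was an arbitrary equitable partition, this gives the claim for every divisor of $G$.
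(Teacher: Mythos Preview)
Your argument is correct. The identity $AP=PB$ is the key structural fact, and your use of $P^{T}$ to push an eigenvector of $A$ down to one of $B^{T}$, with the main-eigenvalue hypothesis ensuring $\mathbf{1}_r^{T}(P^{T}x)=\mathbf{j}^{T}x\neq 0$, is clean and complete.

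As for comparison: the paper does not actually prove this proposition. It is quoted from Rowlinson's survey \cite{Rowlinson} and used as a black box. The short discussion that follows it in the paper records $A(G)P=PA(G/\pi)$ and observes that $\mathbf{x}\mapsto P\mathbf{x}$ lifts eigenvectors of $A(G/\pi)$ to eigenvectors of $A(G)$; that is the \emph{opposite} direction to what is needed here, and the paper then invokes Proposition~\ref{E-4} to obtain Lemma~\ref{E-5-1}. Your proof supplies exactly the missing direction, via the transpose map $x\mapsto P^{T}x$ rather than the lift $\mathbf{x}\mapsto P\mathbf{x}$. So you have filled in what the paper leaves to the reference.
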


The following result is a corollary of  Proposition \ref{E-4}.

\begin{Coro} \label{E-5}
If $G$ has a $r$-equitable partition $\pi$, then $G$ has at most $r$ main eigenvalues.
\end{Coro}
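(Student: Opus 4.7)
The plan is to deduce this directly from Proposition \ref{E-4}. Suppose $G$ has an $r$-equitable partition $\pi$ with cells $C_1,C_2,\ldots,C_r$ and parameters $(c_{ij})_{r\times r}$. Then the quotient graph $G/\pi$ has adjacency matrix $A(G/\pi)=(c_{ij})_{r\times r}$, which is an $r\times r$ matrix and therefore has at most $r$ distinct eigenvalues, counted as roots of its characteristic polynomial.

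By Proposition \ref{E-4}, every main eigenvalue of $G$ must appear as an eigenvalue of the divisor $G/\pi$. Hence the set of main eigenvalues of $G$ is contained in the spectrum of $A(G/\pi)$, which has cardinality at most $r$. This yields the conclusion that $G$ has at most $r$ main eigenvalues.

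There is essentially no obstacle here: the corollary is a one-line consequence of Proposition \ref{E-4} once one observes that the quotient matrix is of order $r$. The only thing to be careful about is to use the number of distinct eigenvalues of $A(G/\pi)$ (rather than counting with multiplicity), but since the main eigenvalues of $G$ are by definition a set of distinct numbers, the bound $r$ follows immediately.
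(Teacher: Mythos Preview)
Your proof is correct and matches the paper's approach exactly: the paper gives no explicit proof but simply states that the result is a corollary of Proposition \ref{E-4}, which is precisely the deduction you carry out.
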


 Let $\pi$ be a $r$-equitable partition of $G$ with
cells $C_1,C_2,...,C_r$ and
$P$  be the character matrix whose $i$-th column is   the character
vector of $C_i$ $(1\leq i\leq r)$.  It is easy to see that
 $A(G)P=PA(G/\pi)$. Hence for all $\mathbf{x}\in \mathbb{R}^r$ we have
$$(\lambda I-A(G))P\mathbf{x}=P(\lambda I-A(G/\pi))\mathbf{x}.$$
 Since $P\mathbf{x}=\mathbf{0}$
if and only if $\mathbf{x}=\mathbf{0}$, it follows that $\mathbf{x}$
is an eigenvector of $A(G/\pi)$ with eigenvalue $\lambda$ if and
only if $P\mathbf{x}$ is an eigenvector of $A(G)$  with eigenvalue
$\lambda$. Thus, combining Proposition \ref{E-4} we have the
following result.

\begin{lem} \label{E-5-1}
Let $\pi$ be a $r$-equitable partition of $G$ and $P$ the
character matrix. Then the main eigenvalues of $G$ are the
 eigenvalues of $A(G/\pi)$ which have  an eigenvector
$\mathbf{x}$ such that the sum of the entries of $P\mathbf{x}$ is
not equal to zero.
\end{lem}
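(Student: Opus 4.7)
The plan is to prove both inclusions separately, relying on the identity $A(G)P = PA(G/\pi)$ already displayed before the lemma, together with the $A$-invariance of the column space of $P$.

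The easy direction is to show that if $\mathbf{x}$ is an eigenvector of $A(G/\pi)$ for $\lambda$ with $\mathbf{j}^{T}P\mathbf{x}\neq 0$, then $\lambda$ is a main eigenvalue of $G$. From the remark immediately preceding the lemma, $P\mathbf{x}$ is an eigenvector of $A(G)$ with eigenvalue $\lambda$, and by hypothesis the sum of its entries is nonzero, so by definition $\lambda$ is main.

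For the harder direction, suppose $\lambda$ is a main eigenvalue of $G$. By Proposition \ref{E-4}, $\lambda$ is an eigenvalue of $A(G/\pi)$, so what must be produced is an eigenvector $\mathbf{x}$ for $\lambda$ in the quotient whose image $P\mathbf{x}$ has nonzero entry sum. My strategy is to build $\mathbf{x}$ from the orthogonal projection of $\mathbf{j}$ onto the $\lambda$-eigenspace $E_{\lambda}$ of $A(G)$. Two facts drive the argument: first, $\mathbf{j}=P\mathbf{1}$ since the columns of $P$ are the characteristic vectors of the cells $C_{1},\ldots,C_{r}$, so $\mathbf{j}\in\operatorname{Col}(P)$; second, $\operatorname{Col}(P)$ is $A(G)$-invariant because $A(G)P=PA(G/\pi)$.

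The main step is to use these two facts to show that the orthogonal projection $\mathbf{j}_{\lambda}$ of $\mathbf{j}$ onto $E_{\lambda}$ still lies in $\operatorname{Col}(P)$. Since $A(G)$ is symmetric, its invariant subspace $\operatorname{Col}(P)$ decomposes orthogonally as $\operatorname{Col}(P)=\bigoplus_{\mu}\bigl(\operatorname{Col}(P)\cap E_{\mu}\bigr)$, so writing $\mathbf{j}=\sum_{\mu}\mathbf{j}_{\mu}$ with $\mathbf{j}_{\mu}\in E_{\mu}$ forces each summand $\mathbf{j}_{\mu}$ to lie in $\operatorname{Col}(P)\cap E_{\mu}$. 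Once this is established, because $\lambda$ is main there exists $\mathbf{y}\in E_{\lambda}$ with $\mathbf{j}^{T}\mathbf{y}\neq 0$, and this forces $\mathbf{j}_{\lambda}\neq\mathbf{0}$ (otherwise $\mathbf{j}\perp E_{\lambda}$). Letting $\mathbf{x}$ be the unique vector with $P\mathbf{x}=\mathbf{j}_{\lambda}$ (uniqueness follows from injectivity of $P$), the bijection between $E_{\lambda}$-eigenvectors in $\operatorname{Col}(P)$ and eigenvectors of $A(G/\pi)$ at $\lambda$ gives $A(G/\pi)\mathbf{x}=\lambda\mathbf{x}$, and $\mathbf{j}^{T}P\mathbf{x}=\mathbf{j}^{T}\mathbf{j}_{\lambda}=\|\mathbf{j}_{\lambda}\|^{2}\neq 0$, as required.

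The only real obstacle is justifying the orthogonal decomposition of $\operatorname{Col}(P)$ into its intersections with the eigenspaces $E_{\mu}$; everything else is a matter of unwinding definitions and using the identity $A(G)P=PA(G/\pi)$ already derived in the excerpt. This decomposition step is standard once one observes that $\operatorname{Col}(P)$ is $A(G)$-invariant and $A(G)$ is symmetric, so I expect the writeup to be short.
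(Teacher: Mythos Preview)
Your proof is correct. The paper gives no separate proof of this lemma; it presents the statement as an immediate consequence of the paragraph just above it (the identity $A(G)P=PA(G/\pi)$ and the resulting bijection between eigenvectors of $A(G/\pi)$ and eigenvectors of $A(G)$ lying in $\operatorname{Col}(P)$), together with Proposition~\ref{E-4}. That discussion covers your easy direction verbatim, but for the converse the paper says only ``combining Proposition~\ref{E-4} we have the following result,'' leaving implicit the step that an eigenvector with nonzero entry sum can actually be found \emph{inside} $\operatorname{Col}(P)$. Your projection argument---projecting $\mathbf{j}$ onto $E_\lambda$ and observing that the projection remains in the $A(G)$-invariant subspace $\operatorname{Col}(P)$ because $\mathbf{j}=P\mathbf{1}\in\operatorname{Col}(P)$---is precisely what is needed to fill that gap, and it is carried out correctly. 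An equivalent short variant is to take any $\mathbf{z}\in E_\lambda$ with $\mathbf{j}^T\mathbf{z}\neq 0$, split it along the $A(G)$-invariant decomposition $\operatorname{Col}(P)\oplus\operatorname{Col}(P)^\perp$, and note that since $\mathbf{j}\in\operatorname{Col}(P)$ the $\operatorname{Col}(P)$-component of $\mathbf{z}$ already has nonzero entry sum; this is the same idea viewed from the eigenvector side rather than from $\mathbf{j}$.
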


Let $G$ be a connected non-regular graph having a  $2$-equitable
partition $\pi:V(G)=C_1\cup C_2$ with parameters  $(c_{11},c_{12};c_{21},c_{22})$. From Corollary \ref{E-5} we know
that $G$ has exactly two main eigenvalues. Clearly, the induced
subgraphs  $G[C_1]$ is $c_{11}$-regular, $G[C_2]$ is
$c_{22}$-regular, and $d_G(u)=c_{11}+c_{12}$ for
 $u\in C_1$,   $d_G(v)=c_{21}+c_{22}$ for $v\in C_2$. Since $G$ is non-regular, we
 have $c_{11}+c_{12}\neq c_{21}+c_{22}$. Applying
(\ref{equ-1}) to $u\in C_1$ and $v\in C_2$, respectively, we have the following linear equations of $a$ and $b$
\begin{equation}\label{equ-3}\left\{\begin{array}{lll}a(c_{11}+c_{12})+b&=c_{11}(c_{11}+c_{12})+c_{12}(c_{21}+c_{22})\\
a(c_{21}+c_{22})+b&=c_{21}(c_{11}+c_{12})+c_{22}(c_{21}+c_{22})\end{array}\right.\end{equation}
which gives $a=c_{11}+c_{22}$ and $b=c_{12}c_{21}-c_{11}c_{22}$,
i.e., $G\in \mathscr{G}(a, b)$. Conversely, if  equations
(\ref{equ-3}) has the unique solution then $c_{11}+c_{12}\neq
c_{21}+c_{22}$. Thus we obtain the following result.

\begin{lem}\label{E-6-1} Let $c_{11}, c_{22}\geq 0$, $c_{12}, c_{21}\geq
1$ be integers. Then equations (\ref{equ-3}) has the unique solution
$a=c_{11}+c_{22}$ and $b=c_{12}c_{21}-c_{11}c_{22}$ if and only if
there exists a graph  $G\in \mathscr{G}(a,b)$   having a $2$-equitable
partition $\pi$ with parameters  $(c_{11},c_{12};c_{21},c_{22})$.
\end{lem}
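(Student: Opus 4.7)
The plan is to handle the two directions separately, with the backward implication essentially already in the paragraph preceding the lemma statement. Given $G\in\mathscr{G}(a,b)$ with the prescribed $2$-equitable partition, applying (\ref{equ-1}) to any $u\in C_1$ and any $v\in C_2$ reproduces exactly the linear system (\ref{equ-3}) with solution $a=c_{11}+c_{22}$ and $b=c_{12}c_{21}-c_{11}c_{22}$; non-regularity of $G$ (forced by the existence of two main eigenvalues) gives $c_{11}+c_{12}\neq c_{21}+c_{22}$, which is precisely the condition that the $2\times 2$ coefficient matrix of (\ref{equ-3}) be nonsingular and hence the solution unique.

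The substantive direction is the forward one: assuming (\ref{equ-3}) has a unique solution, construct a graph realizing the parameters. First I would note that uniqueness of the solution of (\ref{equ-3}) is exactly $c_{11}+c_{12}\neq c_{21}+c_{22}$, since the coefficient determinant equals $(c_{11}+c_{12})-(c_{21}+c_{22})$. Then I would produce $G$ combinatorially: choose $|C_1|=n_1$ and $|C_2|=n_2$ of the form $n_1=k c_{21}$, $n_2=k c_{12}$ for a sufficiently large integer $k$ such that $n_i\geq c_{ii}+1$ and $n_ic_{ii}$ is even. On $C_1$ place any $c_{11}$-regular graph (existing by the standard realizability criterion), on $C_2$ any $c_{22}$-regular graph, and between $C_1$ and $C_2$ any $(c_{12},c_{21})$-biregular bipartite graph, which exists thanks to the edge-count identity $n_1c_{12}=kc_{21}c_{12}=n_2c_{21}$ together with $n_1\geq c_{21}$ and $n_2\geq c_{12}$.

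Once $G$ has been built, the partition $\{C_1,C_2\}$ is equitable with exactly the parameters $(c_{11},c_{12};c_{21},c_{22})$, so Corollary \ref{E-5} yields at most two main eigenvalues. Because $c_{11}+c_{12}\neq c_{21}+c_{22}$, the graph $G$ is non-regular and hence has at least two main eigenvalues (since a graph is regular iff it has exactly one main eigenvalue); therefore $G$ has exactly two. Finally, applying (\ref{equ-1}) to vertices in $C_1$ and $C_2$ recovers (\ref{equ-3}), so $G\in\mathscr{G}(a,b)$ for the prescribed $a$ and $b$.

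The main obstacle I expect is the connectedness requirement built into the definition of $\mathscr{G}(a,b)$: neither the two regular pieces nor the biregular bipartite piece need individually be connected. I would resolve this by exploiting the freedom in the bipartite realization — for instance choosing an explicit circulant-style biregular bipartite graph between $C_1$ and $C_2$ that is visibly connected, or performing degree-preserving edge swaps on any disconnected realization to merge components. For $k$ large enough such flexibility is always available, so this is a technical point rather than a genuine barrier, and the forward direction goes through.
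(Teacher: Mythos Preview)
Your proposal is correct and follows essentially the same approach as the paper: the backward direction is handled identically, and for the forward direction the paper likewise takes $|C_1|=c_{21}t$, $|C_2|=c_{12}t$, places a $c_{11}$-regular graph on $C_1$ (an explicit circulant when $c_{11}\ge 2$, chosen connected) and a $c_{22}$-regular graph on $C_2$, and joins them by a specific $(c_{12},c_{21})$-biregular bipartite pattern. The only difference is that the paper's concrete choice of pieces makes connectedness immediate, while you leave it as a patch-up step; otherwise the two arguments coincide.
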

\begin{proof}
The sufficiency has established from the above arguments. For the
necessity we need to construct a graph $G\in \mathscr{G}(a,b)$ having a $2$-equitable
partition $\pi$ with parameters  $(c_{11},c_{12};c_{21},c_{22})$.

 We first construct  $c_{11}$-regular graph $G_1$
and   $c_{22}$-regular graph $G_2$ such that $|V(G_1)|=|V(G_2)|$ and
 $G_1$  is connected.
 Without loss of generality, let $c_{11}\geq c_{22}\geq 0$. If $c_{11}=0$ then $c_{22}=0$.
 We take $G_1=K_1$ and $G_2=K_1$ in this case.  If $c_{11}=1$ then $c_{22}=0$ or $1$.
 We take $G_1=K_2$, $G_2=2K_1$ or $G_2=K_2$ in this case. If $c_{11}\geq 2$,  let
 $t\geq c_{11}+1$ be an even. We take $G_1=X(\mathbb{Z}_t, S_1)$,  $G_2=tK_1(c_{22}=0)$
 or $G_2=\frac{t}{2}K_2 (c_{22}=1)$ or   $G_2=X(\mathbb{Z}_t, S_2)(c_{22}\geq2)$ in this case,
 where $X(\mathbb{Z}_t, S_i)(i=1,2)$  is circulant graph of order $t$ with inverse-closed
  connection set
  $S_i$ such that $\{1,-1\}\subseteq S_i$ and $|S_i|=c_{ii}$.
 Clearly, $G_1$  is connected since $\{1,-1\}\subseteq S_1$.

Let $V(G_1)=\{u_1,u_2,...,u_t\}$ and $V(G_2)=\{v_1,v_2,...,v_t\}$.
Denote by $c_{21}G_1$ the graph consisting of $c_{21}$ copies of
$G_1$ and $c_{12}G_2$ the graph consisting of $c_{12}$ copies of
$G_2$. Now we construct $G$ (see Figure \ref{fig-1} $(A)$) that is obtained from $c_{21}G_1$ and
$c_{1 2}G_2$ by adding $tc_{12}c_{21}$ edges $u_iv_i(1\leq i\leq t)$
between $c_{21}G_1$ and $c_{12}G_2$. It is easy to verify that $G$
is a connected graph with equitable partition $\pi:V(G) =C_1\cup C_2$,
where $C_1=V(c_{21}G_1)$, $C_2=V(c_{12}G_2)$, and
the parameters of $\pi$ are $(c_{11},c_{12};c_{21},c_{22})$.
Since equations (\ref{equ-3}) has the unique solution,  we have
$c_{11}+c_{12}\neq c_{21}+c_{22}$, which implies that $G$ is
non-regular, and so $G\in \mathscr{G}(a,b)$. Thus $G$ is our required.
\end{proof}

\begin{figure}[h]
\begin{center}
\begin{picture}(376,158)
\put(30,10){\resizebox{11cm}{!}{\includegraphics[0,0][500,100]{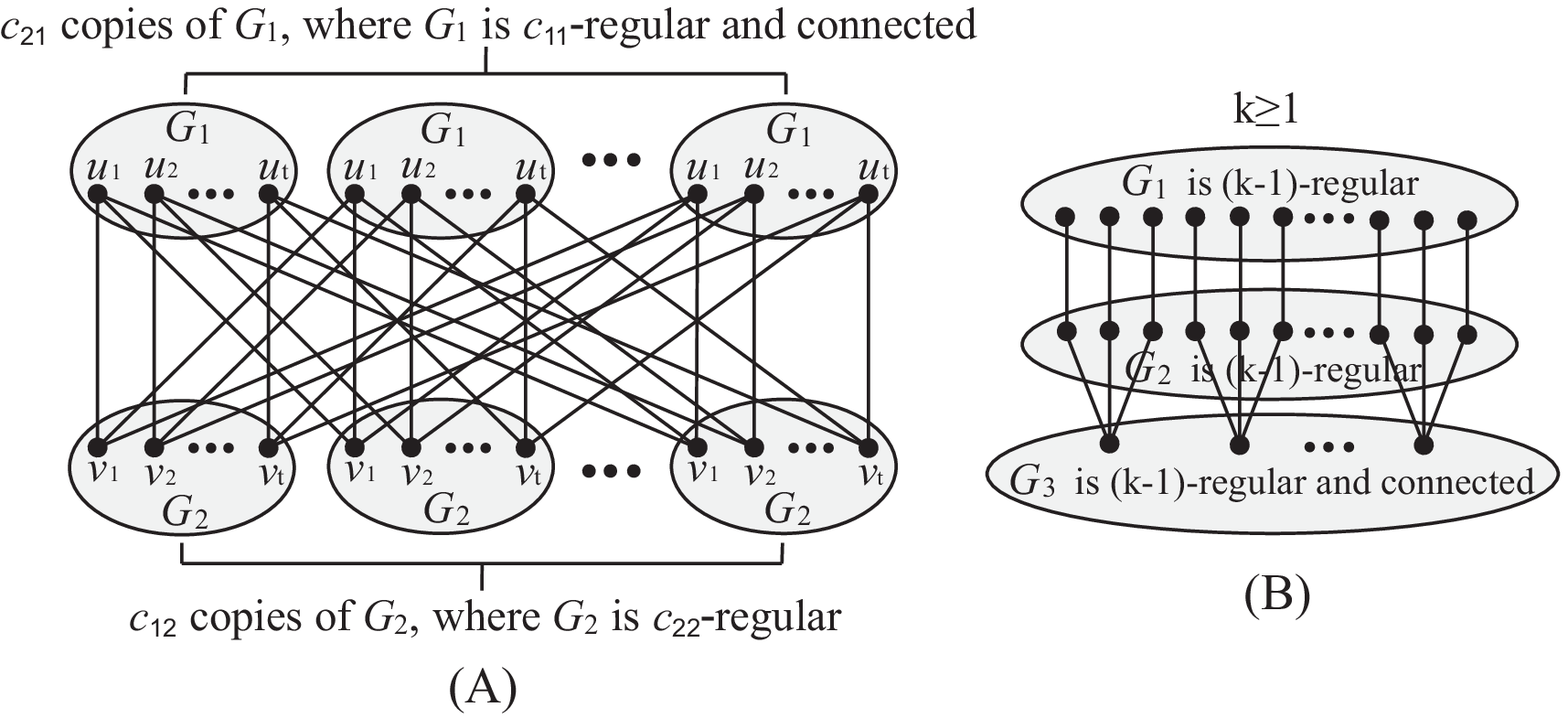}}}
\end{picture}
\end{center}
\vspace{-1.0cm} \caption{ }\label{fig-1} \vspace{-0.2cm}
\end{figure}

\begin{lem}\label{E-6}
For integers $a$ and $b$, if $a\geq 0$ and $a^2+4b>4$ then $ \mathscr{G}(a,b)\not=\emptyset$.
\end{lem}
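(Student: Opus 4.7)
My plan is to invoke Lemma~\ref{E-6-1}: it suffices to exhibit integers $c_{11},c_{22}\ge 0$ and $c_{12},c_{21}\ge 1$ with $c_{11}+c_{22}=a$, $c_{12}c_{21}-c_{11}c_{22}=b$, and $c_{11}+c_{12}\ne c_{21}+c_{22}$, the last condition being precisely what makes the linear system \eqref{equ-3} in the unknowns $a,b$ non-degenerate (its coefficient matrix has determinant $(c_{11}+c_{12})-(c_{21}+c_{22})$). Once such parameters are found, Lemma~\ref{E-6-1} delivers a graph $G\in\mathscr{G}(a,b)$.

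Concretely, I would split $a$ as evenly as possible by setting $c_{11}=\lfloor a/2\rfloor$ and $c_{22}=\lceil a/2\rceil$, so that $c_{11}c_{22}=\lfloor a^2/4\rfloor$. Putting $N=b+\lfloor a^2/4\rfloor$, take $c_{12}=1$ and $c_{21}=N$; then $c_{11}+c_{22}=a$ and $c_{12}c_{21}-c_{11}c_{22}=N-\lfloor a^2/4\rfloor=b$ automatically. It remains to verify that (a) $N\ge 1$, so that $c_{21}$ is a positive integer, and (b) $c_{11}+c_{12}\ne c_{21}+c_{22}$. A direct substitution shows $c_{11}+c_{12}-c_{21}-c_{22}=(\lfloor a/2\rfloor-\lceil a/2\rceil)+1-N$, which equals $1-N$ when $a$ is even and $-N$ when $a$ is odd; so (b) becomes $N\ne 1$ in the even case and $N\ne 0$ in the odd case.

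The heart of the argument is a short parity computation. Since $a^2+4b$ is an integer strictly greater than $4$, one has $a^2+4b\ge 5$; when $a$ is even the sum $a^2+4b$ is even, so in fact $a^2+4b\ge 6$. Writing $N=(a^2+4b-\varepsilon)/4$ with $\varepsilon=0$ if $a$ is even and $\varepsilon=1$ if $a$ is odd, this forces $N\ge 2$ in the even case and $N\ge 1$ in the odd case, which simultaneously secures (a) and (b). I expect this parity step to be the only real obstacle: the hypothesis $a^2+4b>4$ is sharp precisely in that dropping it to $a^2+4b\ge 4$ would permit the degenerate value $N=1$ in the even case, violating the uniqueness condition. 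With (a) and (b) confirmed, Lemma~\ref{E-6-1} concludes the proof, producing the desired graph $G\in\mathscr{G}(a,b)$.
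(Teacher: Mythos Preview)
Your proof is correct and follows essentially the same route as the paper: both invoke Lemma~\ref{E-6-1} and choose parameters that split $a$ as evenly as possible, with one of $c_{12},c_{21}$ equal to $1$ and the other equal to $b+\lfloor a^2/4\rfloor$, then use the parity of $a$ together with $a^2+4b>4$ to force this latter quantity to be $\ge 2$ (even case) or $\ge 1$ (odd case). The only cosmetic differences are that the paper treats the cases $a=2k$ and $a=2k+1$ separately rather than via $\lfloor\cdot\rfloor,\lceil\cdot\rceil$, and it interchanges the roles of the two cells (taking $c_{21}=1$ and $c_{12}$ large instead of your $c_{12}=1$, $c_{21}=N$), which is immaterial by symmetry.
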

\begin{proof}
According to Lemma \ref{E-6-1}, it suffices to show that, given $a$
and $b$ satisfying $a\geq 0$ and $a^2+4b>4$, we can find integers
$c_{11}, c_{22}\geq 0$ and $c_{12}, c_{21}\geq 1$  such that
$a=c_{11}+c_{22}$ and $b=c_{12}c_{21}-c_{11}c_{22}$ is the unique
solution of  equations (\ref{equ-3}). We will distinguish two
situations in what follows.

If $a=2k(k\geq0)$, then $b+k^2\geq 2$ since $a^2+4b>4$.   We put
$$\left\{\begin{array}{lll}c_{11}=k,&c_{12}=b+k^2\\
c_{21}=1,&c_{22}=k\end{array}\right.$$
 Then
$c_{11}+c_{12}=b+k^2+k\neq k+1= c_{21}+c_{22}$. Thus equations
(\ref{equ-3}) has unique solution $a=c_{11}+c_{22}$,
$b=c_{12}c_{21}-c_{11}c_{22}$.

If $a=2k+1(k\geq0)$, then $b+k^2+k\geq 1$ since  $a^2+4b>4$. We put
$$\left\{\begin{array}{lll}c_{11}=k+1,&c_{12}=b+k^2+k\\
c_{21}=1,&c_{22}=k\end{array}\right.$$ Then
$c_{11}+c_{12}=b+k^2+2k+1\neq k+1= c_{21}+c_{22}$. Thus equations
(\ref{equ-3}) has unique solution $a=c_{11}+c_{22}$,
$b=c_{12}c_{21}-c_{11}c_{22}$.

 We complete this proof.
\end{proof}

\begin{exm}\label{E-7}
According to the choices of $c_{ij} (1\le i,j\le 2)$ in the proof of Lemma \ref{E-6}, for $0\le a\le5$ and  $b$
with $a^2+4b>4$ we construct some classes of graphs
$H_i(i=1,2,...,6)\in \mathscr{G}(a,b)$ which are depicted in Figure
\ref{fig-2}, where $a=c_{11}+c_{22}$, $b=c_{12}c_{21}-c_{11}c_{22}$
and  the corresponding parameters $c_{ij} (1\le i,j\le 2)$ are shown
in Table 1.
\end{exm}

\begin{center}
\begin{tabular}{|c|c|c|c|c|c|c|l|}
 \multicolumn{8}{c}{ Table 1}\\

\hline &$k$&$c_{11}$&$c_{12}$&$c_{21}$&$c_{22}$& $a$&\hspace{32pt}
$\mathscr{G}(a,b)$\\\hline

$H_1$&$0$&$k$&$b+k^2\geq 2$&$1$&$k$&$0$& $H_1\in \mathscr{G}(0,b), \
\ b\geq2$\\\hline

$H_2$&$1$&$k$&$b+k^2\geq 2$&$1$&$k$&$2$& $H_2\in \mathscr{G}(2,b), \
\
 b\geq 1$\\\hline

$H_3$&$2$&$k$&$b+k^2\geq 2$&$1$&$k$&$4$& $H_3\in \mathscr{G}(4,b), \
\
 b\geq -2$\\\hline

$H_4$&$0$&$k+1$&$b+k^2+k\geq 1$&$1$&$k$&$1$& $H_4\in
\mathscr{G}(1,b), \ \  b\geq 1$\\\hline

$H_5$&$1$&$k+1$&$b+k^2+k\geq 1$&$1$&$k$&$3$& $H_5\in
\mathscr{G}(3,b), \ \ b\geq -1$\\\hline

$H_6$&$2$&$k+1$&$b+k^2+k\geq 1$&$1$&$k$&$5$& $H_6\in
\mathscr{G}(5,b), \ \  b\geq -5$\\\hline

\end{tabular}
\end{center}

\begin{rem}\label{E-6-2}
For some feasible $a=c_{11}+c_{22}$ and
$b=c_{12}c_{21}-c_{11}c_{22}$, the choices of integers $c_{11},
c_{22}\geq 0$ and $c_{12}, c_{21}\geq 1$  are not unique. Besides
the choices  of Lemma \ref{E-6}, we also get  $H_7\in
\mathscr{G}(2,b), H_8\in \mathscr{G}(4,b), H_9\in
\mathscr{G}(3,b)$(see Figure \ref{fig-2}), and the corresponding
parameters $c_{ij}(1\le i,j\le 2)$  are shown in Table 2.
\end{rem}

\begin{center}
\begin{tabular}{|c|c|c|c|c|c|c|l|}
 \multicolumn{8}{c}{ Table 2}\\

\hline &$k$&$c_{11}$&$c_{12}$&$c_{21}$&$c_{22}$& $a$&\hspace{32pt}
$\mathscr{G}(a,b)$ \\\hline

$H_7$&$1$&$k+1$&$b+k^2-1\geq 1$&$1$&$k-1$&$2$& $H_7\in
\mathscr{G}(2,b), \ \  b\geq 1$\\\hline

$H_8$&$2$&$k+1$&$b+k^2-1\geq 1$&$1$&$k-1$&$4$& $H_8\in
\mathscr{G}(4,b), \ \  b\geq -2$\\\hline

$H_9$&$1$&$k+2$&$b+k^2+k-2\geq 1$&$1$&$k-1$&$3$& $H_9\in
\mathscr{G}(3,b), \ \  b\geq 1$\\\hline

\end{tabular}
\end{center}

\begin{figure}[h]
\begin{center}
\begin{picture}(396,196)
\put(30,10){\resizebox{11cm}{!}{\includegraphics[0,0][500,100]{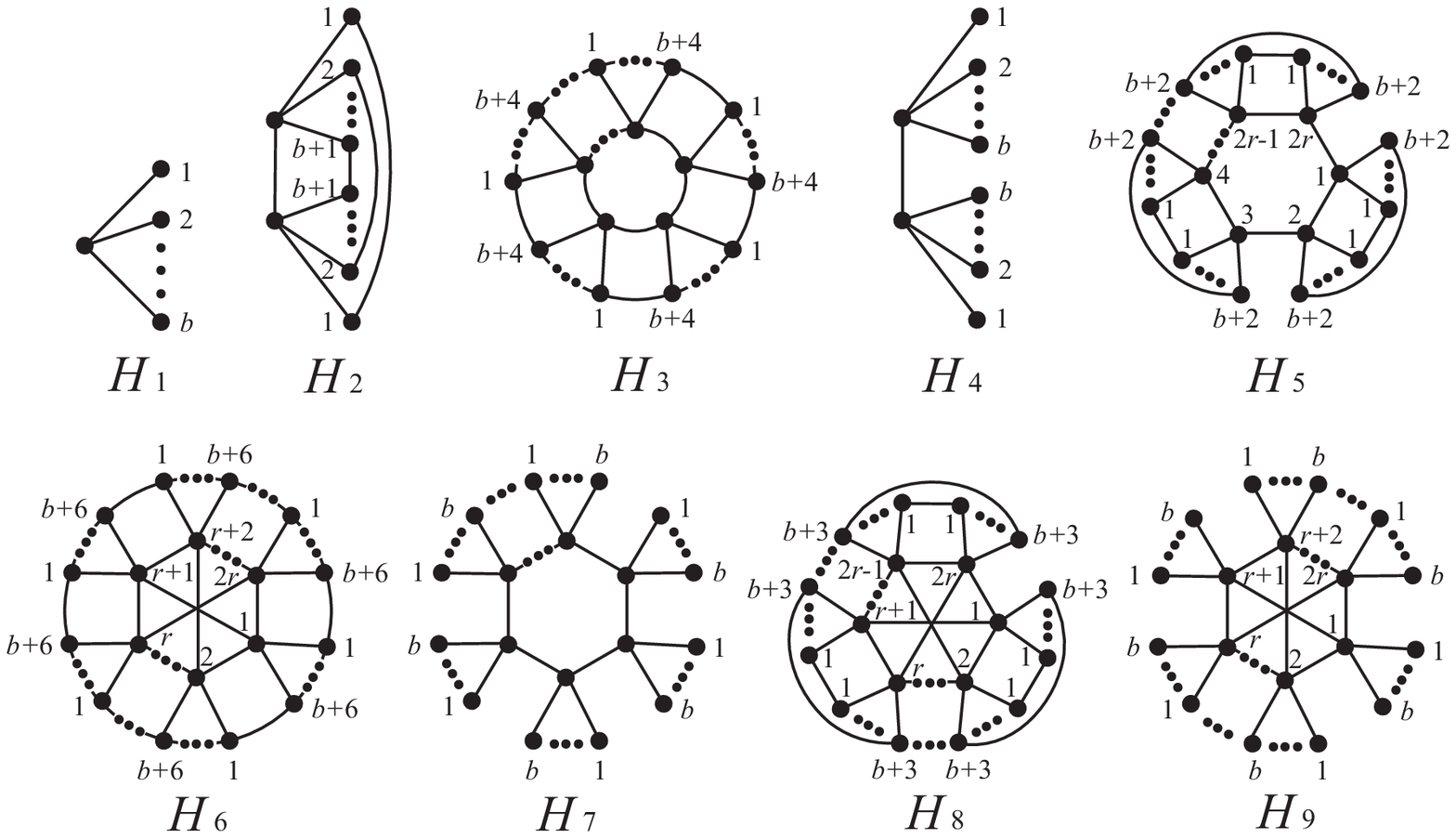}}}
\end{picture}
\end{center}
\vspace{-1.0cm} \caption{ }\label{fig-2} \vspace{-0.2cm}
\end{figure}

\begin{Prop}\label{E-8}
Let $G\in \mathscr{G}(a,b)$, where $a\geq 0$, $a^2+4b=4$ and
$(a,b)\neq (0,1)$. Then any equitable partition of $G$ has at least
three cells.
\end{Prop}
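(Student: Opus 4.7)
The plan is to argue by contradiction: assume $G\in\mathscr{G}(a,b)$ admits an equitable partition $\pi$ with at most two cells, and derive a contradiction from the hypothesis $a^{2}+4b=4$.

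The case of a $1$-cell partition is immediate. Since $G$ has two distinct main eigenvalues it is non-regular, whereas a $1$-equitable partition forces every vertex to have the same degree $c_{11}$. Hence $\pi$ must be a $2$-equitable partition $V(G)=C_1\cup C_2$ with parameters $(c_{11},c_{12};c_{21},c_{22})$. Since $G$ is connected with non-empty cells $C_1,C_2$, we have $c_{12},c_{21}\ge 1$, and since $G$ is non-regular the degrees $c_{11}+c_{12}$ and $c_{21}+c_{22}$ differ, so the setup preceding Lemma \ref{E-6-1} applies: the linear system (\ref{equ-3}) has the unique solution $a=c_{11}+c_{22}$ and $b=c_{12}c_{21}-c_{11}c_{22}$.

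Substituting these expressions into $a^{2}+4b$ I would obtain the key identity
\[
a^{2}+4b=(c_{11}+c_{22})^{2}+4(c_{12}c_{21}-c_{11}c_{22})=(c_{11}-c_{22})^{2}+4c_{12}c_{21}.
\]
Using the hypothesis $a^{2}+4b=4$ together with $c_{12}c_{21}\ge 1$ traps both non-negative summands at their minimum: $c_{11}=c_{22}$ and $c_{12}=c_{21}=1$. But then $c_{11}+c_{12}=c_{22}+c_{21}$, so every vertex of $G$ has the common degree $c_{11}+1$, forcing $G$ to be regular — contradicting $G\in\mathscr{G}(a,b)$.

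I do not anticipate a real obstacle. The only point that needs care is invoking the derivation of Lemma \ref{E-6-1} in the correct form (which requires $c_{11}+c_{12}\neq c_{21}+c_{22}$, guaranteed by non-regularity); once the identity $a^{2}+4b=(c_{11}-c_{22})^{2}+4c_{12}c_{21}$ is in place, the lower bound $4c_{12}c_{21}\ge 4$ makes the edge case $a^{2}+4b=4$ collapse immediately into a regular-graph situation, closing the argument.
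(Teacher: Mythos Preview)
Your argument is correct and actually cleaner than the paper's. Both proofs begin the same way---rule out a $2$-equitable partition by contradiction and invoke the relations $a=c_{11}+c_{22}$, $b=c_{12}c_{21}-c_{11}c_{22}$---but then diverge. The paper first writes $a=2k$, uses the eigenvalue bound of Lemma~\ref{E-1} to pin down $\delta(G)=k$, assumes the minimum-degree cell is $C_1$ so that $c_{11}+c_{12}=k$, and then eliminates to obtain $c_{21}=(1-c_{12}^{2})/c_{12}\le 0$, a contradiction. Your route bypasses the spectral input entirely: the identity $a^{2}+4b=(c_{11}-c_{22})^{2}+4c_{12}c_{21}$ together with $c_{12}c_{21}\ge 1$ forces $c_{11}=c_{22}$ and $c_{12}=c_{21}=1$, hence regularity. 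This is shorter, avoids Lemma~\ref{E-1} and the computation of $\delta(G)$, and makes the hypothesis $(a,b)\neq(0,1)$ visibly unnecessary for the argument (it is only needed to make the statement non-vacuous). You also dispose of the $1$-cell case explicitly, which the paper leaves implicit.
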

\begin{proof}
By assumption,    $a^2=4(1-b)$ is an even, and so is $a$. Let
$a=2k(k\geq 0)$, we have $k\geq1$ since $(a,b)=(0,1)$ if $k=0$.
 From Lemma
\ref{E-1} and (\ref{equ-2}) we know that
$k-1=\lambda_2<\delta(G)<\lambda_1=k+1$, and so $\delta(G)=k$.

On the contrary we may
 suppose that $G$ has a $2$-equitable partition $\pi:V(G)=C_1\cup C_2$ with parameters $(c_{11},c_{12}; c_{21},c_{22})$.  From Lemma \ref{E-6-1} we know that $a=c_{11}+c_{22}$ and $b=c_{12}c_{21}-c_{11}c_{22}$, and   $c_{12}, c_{21}\geq 1$ since $G$ is
connected.
 Without loss of generality, we assume that
 $d(u)=\delta(G)$ for some  $u\in C_1$. Then $k=c_{11}+c_{12}$, and so $1\leq c_{12}\leq k$. We have
$$1\leq c_{21}=\frac{b+c_{11}c_{22}}{c_{12}}=\frac{(1-\frac{1}{4}a^2)+c_{11}(a-c_{11})}{c_{12}}=\frac{(1-k^2)+(k-c_{12})(k+c_{12})}{c_{12}}=\frac{1-c_{12}^2}{c_{12}}.$$
It is impossible.

We complete this proof.
\end{proof}

\begin{lem}\label{E-9}
For integers $a$ and $b$, if $a\geq 0$,  $a^2+4b=4$ and $(a,b)\neq(0,1)$, then $ \mathscr{G}(a,b)\not=\emptyset$.
\end{lem}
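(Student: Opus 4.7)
The hypothesis forces $a = 2k$ with $k \geq 1$ (since $a$ is even and $k = 0$ would give the excluded pair $(0,1)$) and $b = 1 - k^2$. Proposition \ref{E-8} rules out any $2$-equitable partition of a prospective $G \in \mathscr{G}(a,b)$, so my plan is to construct a connected graph $G$ carrying a $3$-equitable partition $\pi: V(G) = C_1 \cup C_2 \cup C_3$ that satisfies (\ref{equ-1}) with this $(a,b)$.

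Guided by Lemma \ref{E-1} (which forces $\delta(G) = k$), I try cells of degrees $d_1 = k$, $d_2 = k+1$, $d_3 = k+2$ and impose $c_{13} = c_{31} = 0$ to simplify. The three instances of (\ref{equ-1}), together with $d_i = \sum_{j} c_{ij}$, become a small linear system in the remaining parameters. One clean solution is
\[
(c_{ij})_{3\times 3} = \begin{pmatrix} k-1 & 1 & 0 \\ 1 & k-1 & 1 \\ 0 & 3 & k-1 \end{pmatrix},
\]
and a direct substitution confirms $\sum_j c_{ij} d_j = 2k d_i + (1-k^2)$ for $i = 1, 2, 3$, as required.

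To realize these parameters as a genuine graph, note that equitability demands $n_1 = n_2 = 3 n_3$. Put $n_3 = m$, and pick $m$ so that $(k-1)$-regular graphs on $m$ and on $3m$ vertices exist; $m = k$ works uniformly for every $k \geq 1$ (the parity of $m(k-1)$ and $3m(k-1)$ is immediate). Then build $G$ by placing a $(k-1)$-regular graph inside each of $C_1, C_2, C_3$ (for example circulant graphs $X(\mathbb{Z}_t, S)$ as in the proof of Lemma \ref{E-6-1}), adding a perfect matching between $C_1$ and $C_2$, and adding a $(1,3)$-biregular bipartite graph between $C_2$ and $C_3$; the inter-cell edges can be arranged so that $G$ is connected. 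For $k = 1$, $m = 1$ recovers the familiar graph $\widetilde{E}_6$.

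Finally, since $G$ has vertices of both degree $k$ and degree $k+2$, it is non-regular; hence the pair of equations in (\ref{equ-1}) at these two degrees determines $(a,b) = (2k, 1-k^2)$ uniquely, and Theorem [A] gives $G \in \mathscr{G}(2k, 1-k^2)$. The only genuinely non-routine step is locating the parameter matrix above; equitability, realizability as a connected graph, and the uniqueness of $(a,b)$ are then all straightforward.
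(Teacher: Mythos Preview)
Your proof is correct and follows essentially the same route as the paper: the same reduction to $a=2k$, $b=1-k^2$ with $k\ge 1$, the same $3$-equitable parameter matrix, and the same construction from three $(k-1)$-regular pieces with sizes in ratio $3:3:1$. The one noteworthy difference is in the final verification: the paper computes the eigenvalues $k+1,\,k-1,\,k-3$ of $A(G/\pi)$ together with their eigenvectors and invokes Lemma~\ref{E-5-1} to identify exactly which two are main, whereas you check (\ref{equ-1}) directly on each cell and then appeal to Theorem~[A] via the non-regularity of $G$---this is cleaner and avoids the eigenvector computation entirely, while the paper's route has the virtue of exhibiting the main eigenvalues explicitly.
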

\begin{proof}
Note that $a\geq 0$, $a^2+4b= 4$ and
$(a,b)\neq(0,1)$ if and only if  $a=2k$ and $b=1-k^2$ for $k\geq 1$. It
suffices to show  $
 \mathscr{G}(2k,1-k^2)\not=\emptyset$ for $k\geq 1$.
 By Proposition \ref{E-8},    $G\in \mathscr{G}(2k,1-k^2)$ has no $2$-equitable partition.
 In what follows we will construct $G\in \mathscr{G}(2k,1-k^2)$
 with a $3$-equitable partition $\pi:V(G)=C_1\cup C_2\cup C_3$.

First  we can construct
three  $(k-1)$-regular graphs $G_1$, $G_2$ and $G_3$ with vertex
sets $V(G_1)=\{u_{ij}|i=1,2,3; j=1,2,...,t\}$,
$V(G_2)=\{v_{ij}|i=1,2,3; j=1,2,...,t\}$ and $V(G_3)=\{w_j|
j=1,2,...,t\}$ such that $G_3$ is connected ($G_3$ will be $K_1$ if $k=1$).  Now we construct $G$ (see Figure \ref{fig-1} $(B)$)
 from $G_1, G_2$ and $G_3$ by adding edges
$\{u_{ij}v_{ij}\mid 1\leq i\leq 3, 1\leq j\leq t\}$ and edges
$\{v_{ij}w_j\mid 1\leq i\leq 3, 1\leq j\leq t\}$.  It is easy to
verify that $G$ is a connected graph with equitable partition $\pi:V(G)
=C_1\cup C_2\cup C_3$, where $C_1=V(G_1)$, $C_2=V(G_2)$,
$C_3=V(G_3)$, and the corresponding parameters $c_{ij}$ are
$$\label{equ-4}\left\{\begin{array}{lll}c_{11}=k-1,& c_{12}=1,&c_{13}=0,\\
c_{21}=1,&c_{22}=k-1,&c_{23}=1,\\
c_{31}=0,&c_{32}=3,&c_{33}=k-1.\end{array}\right.$$ By direct
calculation we  know that the eigenvalues of $A(G/\pi)$ are
$\lambda_1=k+1$, $\lambda_2=k-1$ and $\lambda_3=k-3$ with respect to
eigenvectors $\mathbf{x}_1=(1,2,3)^T$, $\mathbf{x}_2=(-1,0,1)^T$ and
$\mathbf{x}_3=(1,-2,3)^T$, respectively. Let $P$ be the
character matrix of $\pi$. It is easy to verify that  the sums
of the entries of $P\mathbf{x}_1$, $P\mathbf{x}_2$ and
$P\mathbf{x}_3$ are $3t\times1+3t\times2+t\times3\neq0$,
$3t\times(-1)+3t\times0+t\times1\neq0$ and
$3t\times1+3t\times(-2)+t\times3=0$. From Lemma
\ref{E-5-1} we know that $G$ has exactly two main eigenvalues
$\lambda_1=k+1$ and $\lambda_2=k-1$, and so  $G\in
\mathscr{G}(a,b)$, where $a=\lambda_1+\lambda_2=2k$ and
$b=-\lambda_1\lambda_2=1-k^2$.

We complete this proof.
\end{proof}

\begin{exm}\label{E-10}

According to the choices of $c_{ij} (1 \le i,j\le 3)$ in the proof of Lemma \ref{E-9},  for $a=2k$ and
$b=1-k^2(k=1,2,3)$ we construct three  graphs $H_i(i=10,11,12)\in
\mathscr{G}(a,b)$ which are depicted  in Figure \ref{fig-3}, and the
corresponding parameters $c_{ij} (1\le i,j\le 3)$ are shown
in Table 3.
\end{exm}

\begin{center}
\begin{tabular}{|c|c|c|c|c|c|c|c|c|c|c|c|c|l|}
 \multicolumn{14}{c}{ Table 3}\\

\hline
&$k$&$c_{11}$&$c_{12}$&$c_{13}$&$c_{21}$&$c_{22}$&$c_{23}$&$c_{31}$&$c_{32}$&$c_{33}$&
$a$&$b$&$\hspace{20pt}\mathscr{G}(a,b)$
\\\hline

$H_{10}$&$1$&$k-1$&$1$&$0$&$1$&$k-1$&$1$&$0$&$3$&$k-1$&$2$&$0$&$H_{10}\in
\mathscr{G}(2,0)$\\\hline

$H_{11}$&$2$&$k-1$&$1$&$0$&$1$&$k-1$&$1$&$0$&$3$&$k-1$&$4$&$-3$&$H_{11}\in
\mathscr{G}(4,-3)$\\\hline

$H_{12}$&$3$&$k-1$&$1$&$0$&$1$&$k-1$&$1$&$0$&$3$&$k-1$&$6$&$-8$&$H_{12}\in
\mathscr{G}(6,-8)$\\\hline

\end{tabular}
\end{center}

\begin{rem}\label{E-9-1}
The choices of  $c_{ij} (1\le i,j\le 3)$ in the proof of Lemma \ref{E-9}
are not unique for $k\geq 2$. Besides the choices  of Lemma \ref{E-9},
we also get  $H_{13}\in \mathscr{G}(4,-3), H_{14}\in
\mathscr{G}(6,-8)$(see Figure \ref{fig-3}), and the corresponding
parameters $c_{ij} (1\le i,j\le 3)$  are shown in Table 4.
\end{rem}

\begin{center}
\begin{tabular}{|c|c|c|c|c|c|c|c|c|c|c|c|c|l|}
 \multicolumn{14}{c}{ Table 4}\\

\hline
&$k$&$c_{11}$&$c_{12}$&$c_{13}$&$c_{21}$&$c_{22}$&$c_{23}$&$c_{31}$&$c_{32}$&$c_{33}$&
$a$&$b$&$\hspace{20pt}\mathscr{G}(a,b)$
\\\hline

$H_{13}$&$2$&$k-1$&$1$&$0$&$2$&$k-2$&$1$&$0$&$4$&$k-1$&$4$&$-3$&$H_{13}\in
\mathscr{G}(4,-3)$\\\hline

$H_{14}$&$3$&$k-1$&$1$&$0$&$2$&$k-2$&$1$&$0$&$4$&$k-1$&$6$&$-8$&$H_{14}\in
\mathscr{G}(6,-8)$\\\hline

\end{tabular}
\end{center}

\begin{figure}[h]
\begin{center}
\begin{picture}(278,150)
\put(30,10){\resizebox{11cm}{!}{\includegraphics[0,0][500,100]{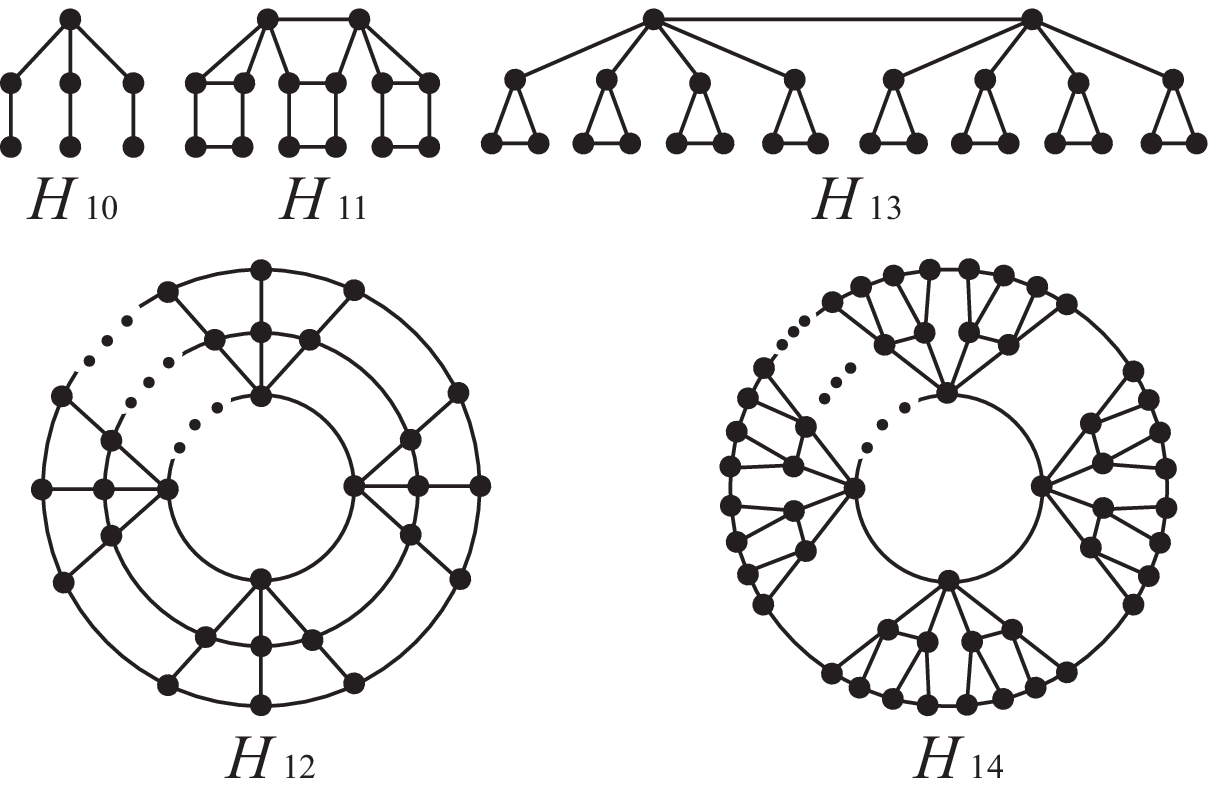}}}
\end{picture}
\end{center}
\vspace{-0.8cm} \caption{ }\label{fig-3} \vspace{-0.2cm}
\end{figure}

From Lemmas \ref{E-3}, \ref{E-6} and \ref{E-9}, we have the
following theorem.

\begin{thm}\label{E-10}
$ \mathscr{G}(a,b)\not=\emptyset$ if and only if $(a,b)\in \mathscr{F}^*$.
\end{thm}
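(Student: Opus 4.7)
The plan is to combine the three main lemmas already proved in this section, since Theorem \ref{E-10} is essentially the bookkeeping statement that assembles them. For the necessity direction, Lemma \ref{E-3} already gives exactly what is needed: whenever $\mathscr{G}(a,b)\neq\emptyset$, the pair $(a,b)$ automatically lies in $\mathscr{F}^*$. So nothing further is required on that side.

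For the sufficiency direction, I would partition the feasible set $\mathscr{F}^*$ according to the value of the discriminant $a^2+4b$. Recall that
\[
\mathscr{F}^*=\{(a,b)\neq(0,1)\mid a\ge 0 \text{ and } a^2+4b\ge 4\},
\]
so every $(a,b)\in\mathscr{F}^*$ satisfies either $a^2+4b>4$ or $a^2+4b=4$ (with the borderline case excluding $(0,1)$). In the strict inequality case, Lemma \ref{E-6} produces a graph in $\mathscr{G}(a,b)$ using the explicit $2$-equitable partition construction of Lemma \ref{E-6-1}. In the equality case $a^2+4b=4$ with $(a,b)\neq(0,1)$, Proposition \ref{E-8} rules out any $2$-equitable construction, but Lemma \ref{E-9} supplies a graph in $\mathscr{G}(a,b)$ via a $3$-equitable partition instead.

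Since these two subcases cover $\mathscr{F}^*$ exhaustively and disjointly, the conjunction of Lemmas \ref{E-6} and \ref{E-9} yields the sufficiency. The proof of the theorem therefore reduces to a single sentence citing Lemmas \ref{E-3}, \ref{E-6}, and \ref{E-9}. There is no genuine obstacle at this final step; all the substantive work, namely verifying that $(a,b)\in\mathscr{F}^*$ is necessary (ruling out $a^2+4b\in\{1,2,3\}$ and $(a,b)=(0,1)$) and constructing witnesses in each remaining case, has already been completed. The only thing to be careful about is to confirm that the case split $a^2+4b>4$ versus $a^2+4b=4$ indeed exhausts $\mathscr{F}^*$, which is immediate from the definition.
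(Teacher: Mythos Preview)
Your proposal is correct and matches the paper's own proof exactly: the theorem is stated immediately after Lemmas \ref{E-3}, \ref{E-6}, and \ref{E-9} with the remark that it follows from them, and your case split on $a^2+4b>4$ versus $a^2+4b=4$ is precisely how those lemmas partition the work. The mention of Proposition \ref{E-8} is extra context rather than a needed ingredient, but it does no harm.
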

It is worth to mention that our proof of Theorem \ref{E-10} is made of constructions. It implies that, for any pair of $(a,b)\in \mathscr{F}^*$, we can construct a graph $G\in\mathscr{G}(a,b)$ such that it has $2$, or $3$-equitable
partition. In the next section, we will give
an upper and lower bounds for the degrees of vertices in
$G\in\mathscr{G}(a,b)$ and  characterize graphs for which the lower
bound is attained.

\section{\large
The bound for the vertex degrees of
$G\in \mathscr{G}(a,b)$  }\label{section-3}

In \cite{Tang2}, Tang and Hou gave the following result.
\begin{lem}[\cite{Tang2}]\label{E-25-1}
Let $G\in \mathscr{G}(a,b)$. If $a\geq 1$ then
\begin{equation}\label{equ-4}\triangle(G)\leq \frac{a^2-a+b+1+\sqrt{(a^2-a+b+1)^2+4(a-1)b}}{2}\end{equation}
\end{lem}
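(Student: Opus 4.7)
The plan is to apply the defining identity (\ref{equ-1}) twice: first at a vertex $v$ of maximum degree, and then at each of its neighbors. This will convert the bound on $\Delta$ into a quadratic inequality.

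Let $v\in V(G)$ with $d(v)=\Delta=\Delta(G)$. Applying (\ref{equ-1}) at $v$ gives
$\sum_{u\in N(v)}d(u)=a\Delta+b$. Next, fix $u\in N(v)$ and apply (\ref{equ-1}) at $u$:
$\sum_{w\in N(u)}d(w)=ad(u)+b$. Because $v\in N(u)$, the term $d(v)=\Delta$ occurs once on the left; the remaining $d(u)-1$ neighbors of $u$ each have degree at least $1$ (since $G$ is connected). Hence $\Delta+(d(u)-1)\le ad(u)+b$, i.e.\ $(a-1)d(u)\ge \Delta-b-1$.

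For $a\ge 2$, this yields the lower bound $d(u)\ge \frac{\Delta-b-1}{a-1}$ for every $u\in N(v)$. Summing over the $\Delta$ neighbors of $v$ and using $\sum_{u\in N(v)}d(u)=a\Delta+b$, I obtain
\begin{equation*}
a\Delta+b \;\ge\; \Delta\cdot\frac{\Delta-b-1}{a-1}.
\end{equation*}
Multiplying by $a-1>0$ and rearranging gives the quadratic inequality
\begin{equation*}
\Delta^{2}-(a^{2}-a+b+1)\Delta-(a-1)b\;\le\;0,
\end{equation*}
whose larger root is exactly the right-hand side of (\ref{equ-4}). For the boundary case $a=1$, the same manipulation collapses to $0\ge \Delta-b-1$, i.e.\ $\Delta\le b+1$, which coincides with the formula in (\ref{equ-4}) since the square root term vanishes. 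Combining both cases completes the proof.

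The argument is essentially a two-step degree-counting inequality, and I do not expect any real obstacle: the only delicate point is noting that the bound $d(w)\ge 1$ for $w\in N(u)\setminus\{v\}$ requires connectedness of $G$ (already part of the definition of $\mathscr{G}(a,b)$), and that the division step is permissible precisely because of the hypothesis $a\ge 1$ (with equality handled separately).
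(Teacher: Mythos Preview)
Your argument is correct. The paper itself does not prove this lemma---it merely cites it from \cite{Tang2}---but it does prove the sharper Lemma~\ref{E-25} by precisely the same two-step degree-counting scheme you use: apply (\ref{equ-1}) at a neighbour $v$ of $u$, bound the remaining terms below by $\delta(G)$ (rather than by $1$), sum over $v\in N(u)$, and solve the resulting quadratic. Your proof is exactly this argument with the trivial lower bound $d(w)\ge 1$ in place of $d(w)\ge\delta(G)$; the paper then remarks that its bound $\Delta(G)\le\Delta^*(G)$ specializes to (\ref{equ-4}) when $\delta(G)=1$. One small wording slip: for $a=1$ the square root does not vanish; rather the cross term $4(a-1)b$ vanishes, so the radical equals $|b+1|=b+1$ (here $b\ge 1$ since $a^2+4b\ge 4$), and the right-hand side of (\ref{equ-4}) becomes $b+1$ as you claim.
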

For $G\in \mathscr{G}(a,b)$, let us define two numbers $\delta^*(G)$ and $\Delta^*(G)$ bellow:
\begin{equation*}\begin{array}{ll}\delta^*(G)=\frac{a^2-a\delta(G)+b+\delta(G)-\sqrt{(a^2-a\delta(G)+b+\delta(G))^2+4(a-\delta(G))b}}{2},\\
\Delta^*(G)=\frac{a^2-a\delta(G)+b+\delta(G)+\sqrt{(a^2-a\delta(G)+b+\delta(G))^2+4(a-\delta(G))b}}{2}.\end{array}\end{equation*}
The following result gives the lower and upper
bounds for the degrees of vertices
 in $G\in\mathscr{G}(a,b)$,  which  improves the result  of Lemma
\ref{E-25-1}.

\begin{lem}\label{E-25}
Let $G\in \mathscr{G}(a,b)$.  For any $u\in V(G)$ we have
 $\delta^*(G)\leq d(u)\leq \Delta^*(G)$. Furthermore,
 $d(u)$ achieves its upper bound  or  lower bound if and only if
 $d(x)=\delta(G)$ for any $x\in (\cup_{v\in N(u)}
N(v))\backslash \{u\}$ (if any).
\end{lem}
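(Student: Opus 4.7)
\textbf{Proof plan for Lemma \ref{E-25}.}

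The strategy is to use the characterizing identity $(\ref{equ-1})$ twice, once at $u$ and once at each neighbour of $u$, and then bound the resulting double sum using $\delta(G)$. Concretely, applying $(\ref{equ-1})$ at $u$ gives $\sum_{v\in N(u)} d(v)=ad(u)+b$. Applying $(\ref{equ-1})$ at each $v\in N(u)$ and summing over $v\in N(u)$ yields
\[
\sum_{v\in N(u)}\sum_{x\in N(v)} d(x)=a\sum_{v\in N(u)}d(v)+b\,d(u)=(a^{2}+b)d(u)+ab.
\]

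Next I would split this double sum according to whether $x=u$ or $x\neq u$. Since $u\in N(v)$ for every $v\in N(u)$, the term with $x=u$ contributes exactly $d(u)\cdot d(u)=d(u)^{2}$, and the remaining part satisfies the sharp lower bound
\[
\sum_{v\in N(u)}\sum_{x\in N(v)\setminus\{u\}} d(x)\ \ge\ \delta(G)\sum_{v\in N(u)}(d(v)-1)=\delta(G)\bigl[(a-1)d(u)+b\bigr],
\]
because $|N(v)\setminus\{u\}|=d(v)-1$ and $d(x)\ge \delta(G)$ for every vertex $x$. Substituting back produces the quadratic inequality
\[
d(u)^{2}-\bigl[a^{2}-a\delta(G)+b+\delta(G)\bigr]d(u)-b\bigl(a-\delta(G)\bigr)\ \le\ 0.
\]
Solving this quadratic in $d(u)$ gives exactly $\delta^{*}(G)\le d(u)\le \Delta^{*}(G)$, which proves the bounds; note the discriminant is automatically non-negative since the real number $d(u)$ lies between the roots.

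For the equality characterisation, the only inequality used is the lower bound $d(x)\ge\delta(G)$ applied to each $x$ in the double sum over $v\in N(u)$ and $x\in N(v)\setminus\{u\}$. Hence equality holds in the quadratic, i.e.\ $d(u)\in\{\delta^{*}(G),\Delta^{*}(G)\}$, if and only if $d(x)=\delta(G)$ for every $x\in\bigl(\bigcup_{v\in N(u)}N(v)\bigr)\setminus\{u\}$, with the convention that the condition is vacuous when this set is empty. I do not foresee any real obstacle; the only care needed is to isolate the contribution of $x=u$ exactly (to avoid losing slack there) before applying the $\delta(G)$ bound to the remaining terms, and to verify that the roots of the resulting quadratic match the stated $\delta^{*}(G)$ and $\Delta^{*}(G)$.
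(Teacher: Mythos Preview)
Your proposal is correct and follows essentially the same approach as the paper: apply (\ref{equ-1}) at each $v\in N(u)$, isolate the contribution of $x=u$, and bound every remaining $d(x)$ below by $\delta(G)$ to obtain the quadratic inequality (\ref{equ-7}). The paper organises the algebra slightly differently---it first rewrites the per-neighbour inequality as $(a-\delta(G))d(v)\ge d(u)-\delta(G)-b$ and then sums over $v\in N(u)$---but this is the same computation, and your version of summing first and then bounding is if anything a little more transparent. The equality analysis is identical in both.
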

\begin{proof}
Applying (\ref{equ-1}) to $v\in N(u)$ we have
\begin{equation}\label{equ-6}ad(v)+b=\sum_{x\in N(v)}d(x)\geq
d(u)+(d(v)-1)\delta(G).\end{equation} It follows that
$(a-\delta(G))d(v)\geq d(u)-\delta(G)-b$. Thus
$$\begin{array}{ll}
(a-\delta(G))(ad(u)+b)&=(a-\delta(G))\sum_{v\in N(u)}d(v)\\
&=\sum_{v\in N(u)}(a-\delta(G))d(v)\\
&\geq d(u)(d(u)-\delta(G)-b),\\
\end{array}$$
which gives
\begin{equation}\label{equ-7}d(u)^2-(a^2-a\delta(G)+\delta(G)+b)d(u)-(a-\delta(G))b\leq0,
\end{equation}
$\delta^*(G)\leq d(u)\leq \Delta^*(G)$  follows immediately by solving the inequality
(\ref{equ-7}).

If $d(u)= \Delta^*(G)$ (or $d(u)= \delta^*(G)$),  then
(\ref{equ-7}), and so (\ref{equ-6}) will be  equality, which implies
that $d(x)=\delta(G)$ for any $x\in (\cup_{v\in N(u)}
N(v))\backslash \{u\}$. Conversely, if $d(x)=\delta(G)$ for any
$x\in (\cup_{v\in N(u)} N(v))\backslash \{u\}$ then (\ref{equ-6}),
and  subsequently (\ref{equ-7})  must be  equality, which implies
that $d(u)= \Delta^*(G)$ or $d(u)=\delta^*(G)$.

We complete this proof.
\end{proof}

From Lemma \ref{E-25}, it immediately follows the boundsfor the smallest and largest  degrees of  $G\in \mathscr{G}(a,b)$,
$$\delta^*(G)\leq \delta(G)\ \ \mbox{and}\ \ \Delta(G)\leq \Delta^*(G).$$
The first (the second) equality holds if and only if there
exists a vertex $u\in V(G)$ such that $d(u)=\delta^*(G)$
($d(u)=\Delta^*(G)$). Additionally, $\Delta(G)\leq
\Delta^*(G)$ leads to (\ref{equ-4}) if $\delta(G)=1$.

 Let
$\mathscr{G}_{\Delta^*}(a,b)=\{G\in\mathscr{G}(a,b)|
\Delta(G)=\Delta^*(G)\}$ and $\mathscr{G}_{\delta^*}(a,b)=\{G\in\mathscr{G}(a,b)|
\delta(G)=\delta^*(G)\}$.  Next, we will characterize graphs in $\mathscr{G}_{\delta^*}(a,b)$.

In \cite{Rowlinson}, Rowlinson gave the following result.

\begin{lem}[\cite{Rowlinson}]\label{E-22}
Let $G$ be a non-trivial connected graph with index $\lambda$. Then $G$ is
a semi-regular bipartite graph if and only if the main eigenvalues of $G$ are $\lambda$ and
$-\lambda$.
\end{lem}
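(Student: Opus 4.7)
My plan is to treat the two directions separately, using the machinery of equitable partitions for the forward direction and the degree vector identity $A\mathbf{d}=\lambda^{2}\mathbf{j}$ together with bipartite Perron–Frobenius for the converse.

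For the forward direction, suppose $G$ is $(r,s)$-semi-regular bipartite with parts $X,Y$; since $G$ has exactly two main eigenvalues only when it is non-regular, I will assume $r\ne s$. The bipartition $\pi:V(G)=X\cup Y$ is a $2$-equitable partition with parameters $(0,s;r,0)$, so by Corollary~\ref{E-5} the graph $G$ has at most two main eigenvalues. Being non-regular it has at least two, hence exactly two. Applying Lemma~\ref{E-6-1} (or reading off from $A(G/\pi)$) gives $a=c_{11}+c_{22}=0$ and $b=c_{12}c_{21}=rs$, so by (\ref{equ-2}) the main eigenvalues are $\pm\sqrt{rs}=\pm\lambda$, where $\lambda$ is the index.

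For the converse, assume the main eigenvalues are $\lambda$ and $-\lambda$. Then $m_G(x)=x^{2}-\lambda^{2}$, so $a=0$ and $b=\lambda^{2}$, and (\ref{equ-1}) reads $A\mathbf{d}=\lambda^{2}\mathbf{j}$ with $\mathbf{d}=A\mathbf{j}$. A direct computation shows
\[
A(\mathbf{d}+\lambda\mathbf{j})=\lambda(\mathbf{d}+\lambda\mathbf{j})\quad\text{and}\quad A(\mathbf{d}-\lambda\mathbf{j})=-\lambda(\mathbf{d}-\lambda\mathbf{j}).
\]
The first vector is strictly positive, so by Perron–Frobenius for the connected graph $G$ it is a positive multiple of the (simple) Perron eigenvector. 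The second vector cannot be zero, for otherwise $G$ would be $\lambda$-regular and have only one main eigenvalue. Hence $-\lambda$ is an eigenvalue of $G$, and because $|-\lambda|$ equals the index of a connected graph, $G$ is bipartite.

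The remaining step—and the only one where I expect some care—is to upgrade ``bipartite'' to ``semi-regular''. In a connected bipartite graph the $-\lambda$ eigenspace is one-dimensional, obtained from the Perron vector by flipping signs on one part $X$. Therefore there exists a scalar $c$ with $d(v)-\lambda=c(d(v)+\lambda)$ for $v\in X$ and $d(v)-\lambda=-c(d(v)+\lambda)$ for $v\in Y$. Solving gives $d(v)=\lambda(1+c)/(1-c)$ on $X$ and $d(v)=\lambda(1-c)/(1+c)$ on $Y$ (the boundary cases $c=\pm1$ are ruled out because they force $\lambda=0$, contradicting non-triviality), so each part has constant degree and $G$ is semi-regular bipartite. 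The main obstacle, then, is simply justifying the sign-flip structure of the $(-\lambda)$-eigenspace in a connected bipartite graph; once that standard fact is invoked, the identification of the degrees follows from a one-line algebraic manipulation.
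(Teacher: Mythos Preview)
The paper does not prove Lemma~\ref{E-22}; it simply quotes the result from Rowlinson's survey \cite{Rowlinson}. So there is no in-paper argument to compare against, and your proposal should be judged on its own merits.

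Your forward direction is fine: the bipartition is a $2$-equitable partition, non-regularity forces exactly two main eigenvalues, and the quotient matrix (equivalently, the discussion preceding Lemma~\ref{E-6-1}) gives $a=0$, $b=rs$, whence the main eigenvalues are $\pm\sqrt{rs}$. Since the index is always main, $\lambda=\sqrt{rs}$ and you are done. (A trivial labeling slip: if the $X$-vertices have degree $r$, the parameters read $(0,r;s,0)$ rather than $(0,s;r,0)$; the conclusion is of course unaffected.)

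Your converse is also correct and is essentially the standard argument. The key identities $A(\mathbf{d}\pm\lambda\mathbf{j})=\pm\lambda(\mathbf{d}\pm\lambda\mathbf{j})$ follow immediately from $A\mathbf{d}=\lambda^{2}\mathbf{j}$; positivity of $\mathbf{d}+\lambda\mathbf{j}$ identifies it with the Perron vector, and nonvanishing of $\mathbf{d}-\lambda\mathbf{j}$ (else $G$ would be regular) forces $-\lambda\in\mathrm{Spec}(G)$, hence bipartiteness. The only point that deserves a word of justification is the one you flag yourself: that in a connected bipartite graph the $(-\lambda)$-eigenspace is one-dimensional and spanned by the sign-flipped Perron vector. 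This is standard---it follows because the map that negates coordinates on one colour class conjugates $A$ to $-A$ and therefore carries the simple Perron eigenspace to the $(-\lambda)$-eigenspace---and once granted, your scalar-$c$ computation cleanly yields constant degrees on each part. The exclusions $c=\pm1$ are handled correctly.

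In short: the paper offers no proof to compare with, and your argument is sound.
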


By Lemma \ref{E-22}, the following result can be obtained directly.

\begin{thm}\label{E-23}
$G\in \mathscr{G}(0,b)$ if and only if $G$ is a connected semi-regular bipartite
 graph with $\delta(G)\Delta(G)=b$, where $b\geq 2$.
\end{thm}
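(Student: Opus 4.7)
The plan is to obtain Theorem \ref{E-23} as a direct consequence of Lemma \ref{E-22} together with equations (\ref{equ-1}) and (\ref{equ-2}). The key observation is that for $a=0$, formula (\ref{equ-2}) gives main eigenvalues $\pm\sqrt{b}$, so the main spectrum is symmetric about the origin, which is exactly the condition appearing in Lemma \ref{E-22}.

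For the forward direction, given $G\in\mathscr{G}(0,b)$, I would first invoke Lemma \ref{E-22} with $\lambda=\sqrt{b}$ to conclude that $G$ is a connected semi-regular bipartite graph. Writing the two distinct part-degrees as $r=\delta(G)$ and $s=\Delta(G)$, I would then evaluate (\ref{equ-1}) at any vertex $v$: since all neighbors of $v$ lie in the opposite part and share a common degree, $\sum_{u\in N(v)}d(u)=rs=\delta(G)\Delta(G)$, which must equal $b$. The bound $b\geq 2$ follows from Lemma \ref{E-3}, which forces $(0,b)\in\mathscr{F}^*$ and in particular excludes $(0,1)$.

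For the reverse direction, let $G$ be a connected semi-regular bipartite graph with $\delta(G)\Delta(G)=b\geq 2$. Condition (\ref{equ-1}) with $a=0$ follows directly from the bipartite structure exactly as above, so $G$ satisfies $A^2\mathbf{j}=b\mathbf{j}$. By Lemma \ref{E-22}, together with the elementary fact that a semi-regular bipartite graph with part-degrees $r,s$ has index $\sqrt{rs}$, the main eigenvalues of $G$ are exactly $\pm\sqrt{b}$, so $G$ has exactly two main eigenvalues and therefore belongs to $\mathscr{G}(0,b)$.

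No step presents a serious obstacle; the entire argument is bookkeeping around Lemma \ref{E-22}. The only point that merits a moment's care is confirming that the regular bipartite case is correctly excluded by the hypotheses (namely $\delta(G)\neq\Delta(G)$ is implicit in the phrase \emph{semi-regular bipartite} as used in Lemma \ref{E-22}, and $b\geq 2$ rules out $K_2$), ensuring the equivalence is tight.
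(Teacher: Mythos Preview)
Your proposal is correct and follows exactly the route the paper intends: the paper itself offers no detailed proof, stating only that the result ``can be obtained directly'' from Lemma~\ref{E-22}, and your argument is precisely the unpacking of that remark via (\ref{equ-1}) and (\ref{equ-2}). The care you take with the exclusion of the regular case and with $b\geq 2$ (via Lemma~\ref{E-3}) is appropriate and matches the paper's conventions.
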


\begin{thm}\label{E-25-2}
$\mathscr{G}_{\delta^*}(a,b)=\mathscr{G}(0,b)$.
\end{thm}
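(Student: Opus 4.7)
The proof naturally splits into the two inclusions. For the easy direction $\mathscr{G}(0,b)\subseteq \mathscr{G}_{\delta^*}(a,b)$, I would invoke Theorem \ref{E-23}: every $G\in\mathscr{G}(0,b)$ is a connected semi-regular bipartite graph with $\delta(G)\Delta(G)=b$. Substituting $a=0$ into the formula for $\delta^*(G)$ collapses the radicand to $(b-\delta(G))^2$, and since $b=\delta(G)\Delta(G)\ge \delta(G)^2\ge \delta(G)$ the absolute value opens the favourable way and yields $\delta^*(G)=\delta(G)$, so $G\in\mathscr{G}_{\delta^*}(a,b)$.

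The substantive inclusion is $\mathscr{G}_{\delta^*}(a,b)\subseteq\mathscr{G}(0,b)$. Writing $\delta=\delta(G)$, I would first isolate the square root in the defining equality $\delta^*(G)=\delta$, square, and simplify; the resulting identity factors as $a\bigl(a\delta-\delta^2+b\bigr)=0$. Either $a=0$, in which case we are done, or $a\ge 1$ and $b=\delta(\delta-a)$. In the latter branch, the sign of the quantity inside the squared root (which must be nonnegative for the squaring step to be reversible) simplifies to $(a-\delta)^2\ge \delta$, forcing in particular $a\neq\delta$. The goal is then to rule this branch out by deriving a contradiction with the two-main-eigenvalue hypothesis.

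To do so, take any $u\in V(G)$ with $d(u)=\delta$. Since $d(u)=\delta^*(G)$, Lemma \ref{E-25} gives $d(x)=\delta$ for every $x\in\bigl(\bigcup_{v\in N(u)}N(v)\bigr)\setminus\{u\}$. For each $v\in N(u)$, every $x\in N(v)\setminus\{u\}$ lies in that set and so has degree $\delta$; substituting into (\ref{equ-1}) gives $ad(v)+b=\sum_{x\in N(v)}d(x)=d(v)\delta$, i.e., $d(v)(\delta-a)=\delta(\delta-a)$, and since $a\neq\delta$ we obtain $d(v)=\delta$. Thus every neighbour of $u$ also realises $\delta^*(G)$, so Lemma \ref{E-25} re-applies at each such $v$ and the same argument forces every neighbour of $v$ to have degree $\delta$. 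Iterating along paths and using the connectedness of $G$, every vertex has degree $\delta$, so $G$ is $\delta$-regular and hence has only one main eigenvalue, contradicting $G\in\mathscr{G}(a,b)$.

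The main obstacle is the propagation step: one must observe that $\delta^*(G)$ depends only on $a$, $b$, $\delta(G)$, which allows Lemma \ref{E-25} to be applied iteratively at every newly identified vertex of degree $\delta$, and one must combine the tight equality in the proof of Lemma \ref{E-25} with (\ref{equ-1}) to pin down the degrees of the \emph{direct} neighbours of $u$ (not merely vertices at distance two). A small but pivotal subtlety is the sign check after squaring, which automatically excludes the otherwise awkward case $\delta=a$, $b=0$ and keeps the propagation argument uniform.
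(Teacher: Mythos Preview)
Your proof is correct, and the overall architecture matches the paper's: verify the inclusion $\mathscr{G}(0,b)\subseteq\mathscr{G}_{\delta^*}(a,b)$ via Theorem~\ref{E-23}, then for the converse reduce $\delta=\delta^*(G)$ to the factorisation $a\bigl(a\delta+b-\delta^2\bigr)=0$ and rule out the second factor by showing it forces $G$ to be regular.

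Where you diverge is in the propagation step. The paper does not invoke Lemma~\ref{E-25} at all here: it simply applies (\ref{equ-1}) directly at a vertex $u$ of degree $\delta$, obtaining $\sum_{v\in N(u)}d(v)=a\delta+b=\delta^2$; since there are $\delta$ summands each at least $\delta$, every neighbour has degree exactly $\delta$, and connectedness finishes the argument. This is shorter and, crucially, needs no separate handling of the case $a=\delta$. Your route---extract distance-two information from the equality case of Lemma~\ref{E-25}, then feed it back into (\ref{equ-1}) at each $v\in N(u)$ to solve $d(v)(\delta-a)=\delta(\delta-a)$---is perfectly valid, but it forces you to insert the sign-check $(a-\delta)^2\ge\delta$ precisely to exclude $a=\delta$, which the paper's direct computation sidesteps entirely. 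So both approaches work; yours leans on the extremal characterisation in Lemma~\ref{E-25}, while the paper's is a one-line pigeonhole on the degree sum.
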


\begin{proof}
 If $G\in \mathscr{G}(0,b)$ then $\delta(G)\Delta(G)=b$ by  Theorem \ref{E-23}, which gives $\delta(G)<b$, and so
$\delta^*(G)=\frac{b+\delta(G)-|b-\delta(G)|}{2}=\delta(G)$, that is $G\in
\mathscr{G}_{\delta^*}(a,b)$. Conversely, if $G\in
\mathscr{G}_{\delta^*}(a,b)$ then $\delta(G)=\delta^*(G)$, that is
$$\delta(G)=\frac{a^2-a\delta(G)+b+\delta(G)-\sqrt{(a^2-a\delta(G)+b+\delta(G))^2+4(a-\delta(G))b}}{2},$$
and so
$$(a^2-a\delta(G)+b-\delta(G))^2-(a^2-a\delta(G)+b+\delta(G))^2-4(a-\delta(G))b=0.$$
It follows that $4a(a\delta(G)+b-\delta(G)^2)=0$, which implies that
$a=0$, and so $G\in \mathscr{G}(0,b)$. Otherwise,
 $a\delta(G)+b-\delta(G)^2=0$.   Let $u\in V(G)$ and
$d(u)=\delta(G)$.  By applying (\ref{equ-1}) to $u$,  we have
$\sum_{v\in N(u)}d(v)=a\delta(G)+b=\delta(G)^2$, which leads to
$d(v)=\delta(G)$ for any $v\in N(u)$.  By regarding  $v\in N(u)$ as
$u$, and repeating  this procedure, we know that $G$ is a
$\delta(G)$-regular graph, which contradicts $G\in
\mathscr{G}(a,b)$.

We complete this proof.
\end{proof}

At the last of this section we propose a problem to characterize the graphs in
$\mathscr{G}_{\Delta^*}(a,b)$.

\section{\large
Characterize the graphs in $\mathscr{G}(a,b)$ for some
$(a,b)\in \mathscr{F}^*$} \label{section-4}

Let $\mathscr{G}$ be the set of the connected graphs having
exactly two main eigenvalues. The graphs in $\mathscr{G}$ would be
partitioned as trees, unicyclic, bicyclic and tricyclic graphs,...,
and so on. The graphs in  $\mathscr{G}$ are characterized on this
way by many authors and  come up against tricyclic graphs (to refer
\cite{Hou1,Hou2,Shi,Hu,Hou3,Tang1,Fan} for references).

Theorem \ref{E-10} leads to a new  partition: $\mathscr{G}=\cup_{(a,b)\in \mathscr{F}^*}\mathscr{G}(a,b)$.
 It provides us a
classification of   $\mathscr{G}$ such that we can advance forward
this work. $\mathscr{G}(0,b)(b\geq 2)$ is determined in Theorem
\ref{E-23}.
In this section we will characterize the graphs in $\mathscr{G}(2,0)$ and  $\mathscr{G}(1,b)$ for $b=1,2,3,4,5$.

Let $a(a\geq 2)$ be an integer. Denote by $T_{a}$ the rooted tree
with root $u$, where $d(u)=a^2-a+1$, $d(v)=a$ for
any $v\in N(u)$ and $d(x)=1$ for any $x\in V(T_a)\setminus(
\{u\}\cup N(u))$. For example, $H_{10}$ depicted in  Figure
\ref{fig-3} is $T_2$. Using (\ref{equ-1}), it is easy to verify that $T_a\in
\mathscr{G}(a,0) $.

\begin{thm}\label{E-38}
 $\mathscr{G}(2,0)=\{T_2\}$.
\end{thm}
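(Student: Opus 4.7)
The plan is to prove the inclusion $\mathscr{G}(2,0)\subseteq\{T_2\}$, since $T_2\in\mathscr{G}(2,0)$ has already been observed immediately before the statement.

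Fix $G\in\mathscr{G}(2,0)$. First I would pin down the extremal degrees. Plugging $a=2$, $b=0$ into (\ref{equ-2}) gives main eigenvalues $\lambda_1=2$ and $\lambda_2=0$, so Lemma \ref{E-1} forces $0<\delta(G)<2<\Delta(G)$, and integrality of degrees yields $\delta(G)=1$ and $\Delta(G)\ge3$. Feeding $a=2$, $b=0$, $\delta(G)=1$ into the definition of $\Delta^*(G)$ from Lemma \ref{E-25} gives $\Delta^*(G)=3$. Combined with $\Delta(G)\le\Delta^*(G)$ this already collapses the maximum degree to $\Delta(G)=\Delta^*(G)=3$.

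Next I would exploit the equality clause of Lemma \ref{E-25} at a vertex $u$ with $d(u)=3$: every $x\in\bigl(\bigcup_{v\in N(u)}N(v)\bigr)\setminus\{u\}$ must be a leaf. Writing $N(u)=\{v_1,v_2,v_3\}$, this immediately rules out any edge inside $N(u)$, since such a $v_j$ would have to be a leaf and yet be adjacent to both $u$ and $v_i$. Hence $N(v_i)\setminus\{u\}$ consists entirely of pendant vertices, and (\ref{equ-1}) applied at $v_i$ reduces to $2d(v_i)=3+(d(v_i)-1)$, which forces $d(v_i)=2$. Thus each $v_i$ has exactly one further neighbor $w_i$, and $d(w_i)=1$.

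To finish, the pendants $w_1,w_2,w_3$ are pairwise distinct, for any coincidence $w_i=w_j$ would make its degree at least $2$. So $\{u,v_1,v_2,v_3,w_1,w_2,w_3\}$ is closed under taking neighbors and, by connectedness, equals $V(G)$; by inspection the induced graph is exactly $T_2$. The main obstacle is arguably nonexistent: the upper bound from Lemma \ref{E-25} is doing essentially all of the work, and the remainder is a short local analysis around a single maximum-degree vertex, with the only small checks being the exclusion of edges inside $N(u)$ and of coincidences among the three pendants, both of which fall out immediately from the degree constraints already established.
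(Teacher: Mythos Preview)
Your proof is correct and follows essentially the same route as the paper: both pin down $\delta(G)=1$ and $\Delta(G)=3$ via Lemma~\ref{E-1} and the bound $\Delta^*(G)=3$ from Lemma~\ref{E-25}, and then carry out a short local analysis around a vertex $u$ of degree $3$ using the equality clause of Lemma~\ref{E-25} together with~(\ref{equ-1}). The only cosmetic difference is the order in which (\ref{equ-1}) and the equality clause are invoked (the paper first applies (\ref{equ-1}) at $u$ to get $d(v_i)=2$, then Lemma~\ref{E-25} to get $d(w_i)=1$; you reverse these), and you are slightly more explicit about excluding edges inside $N(u)$ and coincidences among the $w_i$.
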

\begin{proof}
First we know that $T_2\in
\mathscr{G}(2,0) $. Let $G\in \mathscr{G}(2,0)$, from Lemma
\ref{E-1} and (\ref{equ-2}),  we have
$0=\lambda_2<\delta(G)<\lambda_1=2<\Delta(G)$, which gives $\delta(G)=1$ and $\Delta(G)\geq 3$. By simple calculation, we have $\Delta^*(G)=3$, and so $\Delta(G)=3$.
 Let $u\in V(G)$, $d(u)=3$ and $N(u)=\{v_1,v_2,v_3\}$.
 Applying (\ref{equ-1}) to $u$, we
have $d(v_1)+d(v_2)+d(v_3)=6$, which implies that $d(v_1)=d(v_2)=d(v_3)=2$. Otherwise, there exists a vertex in $N(u)$, say $v_1$, with $d(v_1)=1$. Applying (\ref{equ-1}) to $v_1$, we
have $d(u)=2$, a contradiction. Let $w_i=N(v_i)\setminus \{u\}(i=1,2,3)$,  we
have $d(w_i)=\delta(G)=1$ by Lemma \ref{E-25}. Thus $G=T_2$.
\end{proof}

Let $G\in \mathscr{G}(1,b)$, from Theorem \ref{E-10} we know that
$a^2+4b\geq 4$, which gives
  $b\geq 1$.

\begin{lem}\label{E-29}
Let  $G\in \mathscr{G}(1,b)$. If $b\in \{1,2\}$ then $\delta(G)=1$;
if $b\in \{3,4,5,6\}$ then $\delta(G)\leq2$.
\end{lem}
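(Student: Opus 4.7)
The plan is to read off $\lambda_1$ from (\ref{equ-2}) for each relevant value of $b$ and invoke Lemma \ref{E-1}, which tells us $\delta(G)<\lambda_1$. Since $a=1$, we have
\[
\lambda_1=\frac{1+\sqrt{1+4b}}{2}.
\]
Because $G$ is a connected graph with more than one vertex (otherwise it could not have two distinct main eigenvalues), we also have $\delta(G)\ge 1$.

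First I would handle $b\in\{1,2\}$. For $b=1$ one gets $\lambda_1=(1+\sqrt 5)/2<2$, and for $b=2$ one gets $\lambda_1=2$. In both cases Lemma \ref{E-1} yields $\delta(G)<2$, so $\delta(G)=1$. Next, for $b\in\{3,4,5,6\}$, I would verify that $\lambda_1<3$ (with equality exactly when $b=6$): indeed for $b=3,4,5$ we have $\sqrt{1+4b}\in\{\sqrt{13},\sqrt{17},\sqrt{21}\}\subset(3,5)$, and for $b=6$ we have $\lambda_1=3$. In each case Lemma \ref{E-1} forces $\delta(G)<3$, hence $\delta(G)\le 2$.

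There is no real obstacle here: the statement is a direct numerical consequence of Lemma \ref{E-1} together with the explicit formula for $\lambda_1$, and the whole argument reduces to comparing $(1+\sqrt{1+4b})/2$ with $2$ or $3$ in a handful of cases.
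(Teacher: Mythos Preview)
Your proof is correct and follows exactly the same approach as the paper: both use Lemma \ref{E-1} together with the explicit formula $\lambda_1=(1+\sqrt{1+4b})/2$ from (\ref{equ-2}) and check the relevant values of $b$ case by case. The only minor wording slip is ``$\lambda_1<3$ (with equality exactly when $b=6$)''; you mean $\lambda_1\le 3$, and since Lemma \ref{E-1} gives the strict inequality $\delta(G)<\lambda_1$, the conclusion $\delta(G)\le 2$ still follows.
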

\begin{proof}
 From Lemma
\ref{E-1} and (\ref{equ-2}),  we have $\delta(G)<\lambda_1=\frac{1+
\sqrt{1^2+4b}}{2}$. By replacing $b$ in $\frac{1+ \sqrt{1^2+4b}}{2}$
with $1,2,3,4,5$ and $6$, respectively, our result will be verified.
\end{proof}

 A double star $S_{n_1,n_2}$ is a graph of order $n_1+n_2+2$
obtained from an edge $uv$ by adding $n_1+n_2$ pendant edges
$uu_1,uu_2,...,uu_{n_1},vv_1,vv_2,...,vv_{n_2}$,  where $n_1\geq1$
and $n_2\geq1$.

\begin{lem}\label{E-30}
If $G\in \mathscr{G}(1,b)$ and $\delta(G)=1$, then $G=S_{b,b}$.
\end{lem}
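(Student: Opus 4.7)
The plan is to pin down the local structure around any pendant vertex using equation~(\ref{equ-1}) together with Lemma~\ref{E-25}. Start by letting $v_0$ be a pendant with unique neighbor $u_0$; applying (\ref{equ-1}) at $v_0$ immediately gives $d(u_0)=1+b=b+1$. Next, plugging $a=1$ and $\delta(G)=1$ into the formula for $\Delta^*(G)$ yields $\Delta^*(G)=b+1$, so $u_0$ actually attains the upper bound of Lemma~\ref{E-25}.

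By the equality clause of Lemma~\ref{E-25}, every vertex $x\in\bigl(\bigcup_{v\in N(u_0)}N(v)\bigr)\setminus\{u_0\}$ has degree $1$. I would use this to classify the neighbors of $u_0$: each one is either itself a pendant, or else has at least one neighbor different from $u_0$, and by the preceding sentence any such neighbor is a pendant. In the latter case, repeating the opening paragraph's argument (a vertex adjacent to a pendant has degree $b+1$) forces that neighbor of $u_0$ to have degree $b+1$ as well. So $N(u_0)$ splits cleanly into pendants and ``twin hubs'' of degree $b+1$.

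Next, count. If $u_0$ has $p$ pendant neighbors and $b+1-p$ neighbors of degree $b+1$, then applying (\ref{equ-1}) at $u_0$ gives
\[
p+(b+1-p)(b+1)=d(u_0)+b=2b+1,
\]
which simplifies to $p=b$. Hence $u_0$ has exactly $b$ pendants and exactly one non-pendant neighbor $u_0'$ of degree $b+1$. Running the same argument starting from any pendant neighbor of $u_0'$ shows $u_0'$ also has $b$ pendants and a single non-pendant neighbor of degree $b+1$; but by the distance-$2$ constraint at $u_0$ the unique non-pendant neighbor of $u_0'$ must be $u_0$ itself. Connectedness then forces $G=S_{b,b}$.

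The main obstacle is recognizing that the equality case of Lemma~\ref{E-25} is the right lever: equation~(\ref{equ-1}) alone does not immediately rule out non-pendant vertices at distance $2$ from $u_0$, but the second-neighbor constraint does, and this is exactly what collapses the structure to a double star. Everything else is bookkeeping with (\ref{equ-1}).
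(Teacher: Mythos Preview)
Your proof is correct and follows essentially the same approach as the paper: both arguments first observe that a pendant's neighbor $u_0$ has degree $b+1=\Delta^*(G)$, invoke the equality clause of Lemma~\ref{E-25} to force all second neighbors of $u_0$ to be pendants, and then use (\ref{equ-1}) at $u_0$ to conclude exactly one neighbor is a second hub of degree $b+1$ while the rest are pendants. The only difference is cosmetic---you set up an explicit count $p+(b+1-p)(b+1)=2b+1$ where the paper argues directly from the sum $\sum_{i}d(v_i)=2b+1$---so the two proofs are the same in substance.
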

\begin{proof}
By simple calculation, we have $\Delta^*(G)=1+b$. Let $v_1\in V(G)$  be a pendant vertex and  $u$ be the unique
neighbor of $v_1$. Applying (\ref{equ-1}) to $v_1$, we have
$d(u)=1+b=\Delta^*(G)$. Let $N(u)=\{v_1,v_2,...,v_{1+b}\}$. Applying
(\ref{equ-1}) to $u$,  we have $\sum_{i=1}^{1+b}d(v_i)=1+2b$, which
implies that  there exists a vertex in $N(u)$, say $v_{1+b}$, with $d(v_{1+b})\ge2$. Let $w_1\in N(v_{1+b})\setminus \{u\}$,  we
have $d(w_1)=1$ by Lemma \ref{E-25}, and again get $d(v_{1+b})=1+b$ by applying (\ref{equ-1}) to $w_1$. Since $\sum_{i=1}^{1+b}d(v_i)=1+2b$, we have
$d(v_1)=d(v_2)=\cdots =d(v_{b})=1$. Let
$N(v_{1+b})=\{u,w_1,w_2,...,w_{b}\}$. Similarly, we obtain
$d(w_1)=d(w_2)=\cdots =d(w_b)=1$ by regarding $v_{1+b}$ as $u$ in above arguments. Hence $G=S_{b,b}$.
\end{proof}

Denote by $\mathscr{G}_1(1,b) (b\geq 1)$  the set  of all the
connected graphs in which every graph has a $2$-equitable partition
$\pi: V(G)=C_1\cup C_2$ with  parameters $(1,i;j,0)$, where $i,j\geq 1$, $ ij=b$ and $j\neq 1+i$.  Using
(\ref{equ-1}), it is easy to  verify that
$\mathscr{G}_1(1,b) \subseteq \mathscr{G}(1,b)$.

Obviously,
$S_{b,b}\in \mathscr{G}_1(1,b) $ and the corresponding parameters are $(1,b;1,0)$.

\begin{thm}\label{E-34}
$\mathscr{G}(1,1)=\{S_{1,1}\}$, $\mathscr{G}(1,2)=\{S_{2,2}\}$.
\end{thm}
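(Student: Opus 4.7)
The plan is to combine the two preceding lemmas directly. By Lemma \ref{E-29}, any $G\in\mathscr{G}(1,b)$ with $b\in\{1,2\}$ satisfies $\delta(G)=1$. Then Lemma \ref{E-30} forces $G=S_{b,b}$. Since a double star $S_{b,b}$ with $b\in\{1,2\}$ is easily checked to lie in $\mathscr{G}(1,b)$ (verifying the identity $\sum_{u\in N(v)}d(u)=d(v)+b$ separately on central vertices and pendant vertices), we obtain both equalities.

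Concretely, first I would fix $b=1$, take any $G\in\mathscr{G}(1,1)$, apply Lemma \ref{E-29} to conclude $\delta(G)=1$, and then invoke Lemma \ref{E-30} to get $G=S_{1,1}$. The reverse inclusion is the trivial check $S_{1,1}\in \mathscr{G}(1,1)$, i.e., $S_{1,1}=P_4$ and one verifies $\sum_{u\in N(v)}d(u)=d(v)+1$ holds at each vertex. Then I would repeat the identical argument for $b=2$.

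Since the theorem follows by stitching together two already-proved lemmas, there is no real obstacle; the only care needed is to explicitly note that the reverse inclusions $S_{b,b}\in\mathscr{G}(1,b)$ indeed hold for $b=1,2$, which matches the more general observation $S_{b,b}\in \mathscr{G}_1(1,b)\subseteq\mathscr{G}(1,b)$ recorded just before the theorem statement (with $2$-equitable parameters $(1,b;1,0)$ satisfying $1\neq 1+b$ for $b\ge 1$). No further case analysis or structural argument is necessary.
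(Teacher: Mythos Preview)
Your proposal is correct and matches the paper's proof essentially line for line: the paper also observes $S_{b,b}\in\mathscr{G}_1(1,b)\subseteq\mathscr{G}(1,b)$ for the reverse inclusion, then applies Lemma~\ref{E-29} to get $\delta(G)=1$ and Lemma~\ref{E-30} to conclude $G=S_{b,b}$.
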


\begin{proof}
First we know that $S_{b,b}\in\mathscr{G}_1(1,b)\subseteq
\mathscr{G}(1,b)$. Let $G\in \mathscr{G}(1,b)$ and $b\in \{1,2\}$.
 Then $\delta(G)=1$ by Lemma \ref{E-29},
and so $G=S_{b,b}$ by Lemma \ref{E-30}.
\end{proof}

For $b\geq 3$, let $\mathcal {G}_1(1,b)=\{G\in  \mathscr{G}_1(1,b)| (1,i;j,0)=(1,1;b,0)\}$. Let $\mathcal {G}_2(1,4)=\{G\in  \mathscr{G}_1(1,4)| (1,i;j,0)=(1,2;2,0)\}$. We have the following result.

\begin{lem}\label{E-33}
If $G\in \mathscr{G}(1,b)$ and  $\delta(G)=2$, then
$1+\frac{b}{2}\leq \Delta(G)\leq b$. In particular, if $\Delta(G)=b$
then $G\in \mathcal {G}_1(1,b)$.
\end{lem}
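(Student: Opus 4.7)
The plan has three parts. \emph{(i) Upper bound.} Note first that $\delta(G)=2$ forces $b\geq 3$ by Lemma \ref{E-29}, since $\delta(G)=1$ is required when $b\in\{1,2\}$. Substituting $a=1$ and $\delta(G)=2$ into the formula for $\Delta^*(G)$ in Section \ref{section-3} reduces to $\Delta^*(G)=\bigl((b+1)+|b-1|\bigr)/2=b$, so Lemma \ref{E-25} gives $\Delta(G)\leq b$. \emph{(ii) Lower bound.} Take any $v$ with $d(v)=\delta(G)=2$; by (\ref{equ-1}) its two neighbors' degrees sum to $b+2$, so the larger of them is at least $1+b/2$, yielding $\Delta(G)\geq 1+b/2$.

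\emph{(iii) Equitable partition when $\Delta(G)=b$.} Set $B=\{v\in V(G):d(v)=b\}$ and $S=\{v\in V(G):d(v)=2\}$. The crux is to prove $V(G)=B\cup S$. Applying the equality case of Lemma \ref{E-25} at any $u\in B$ (noting $d(u)=b=\Delta^*(G)$) yields that every vertex in $N(u)\cup(N(N(u))\setminus\{u\})$ has degree $\delta(G)=2$ and therefore lies in $S$; in particular, any $v\notin B\cup S$ satisfies $d_G(v,B)\geq 3$. Suppose for contradiction such a $v$ exists, and let $v=p_0,p_1,\dots,p_t$ with $t\geq 3$ be a shortest $v$-to-$B$ path, $p_t\in B$. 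The previous observation applied to $p_t$ puts $p_{t-1}$ and $p_{t-2}$ in $S$, so each has degree $2$; consequently the two neighbors of $p_{t-2}$ are exactly $p_{t-3}$ and $p_{t-1}$. Plugging into (\ref{equ-1}) at $p_{t-2}$ and using $d(p_{t-1})=2$ forces $d(p_{t-3})=b$, i.e., $p_{t-3}\in B$. But the shortest-path property gives $d_G(p_{t-3},B)=3$, contradicting $p_{t-3}\in B$. Hence $V(G)=B\cup S$.

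With this in hand, put $C_1=S$ and $C_2=B$. Every $v\in C_2$ has all $b$ neighbors in $S$ (Lemma \ref{E-25} again), so $(c_{21},c_{22})=(b,0)$; every $v\in C_1$ has two neighbors of degrees in $\{2,b\}$ summing to $b+2$, and since $b\neq 2$ the only compatible multiset is $\{2,b\}$, so $(c_{11},c_{12})=(1,1)$. Thus $\pi:V(G)=C_1\cup C_2$ is a $2$-equitable partition with parameters $(1,1;b,0)$; since $(i,j)=(1,b)$ satisfies $i,j\geq 1$, $ij=b$, and $j\neq 1+i$ (as $b\geq 3$), we conclude $G\in\mathcal{G}_1(1,b)$. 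The main obstacle is the propagation step $V(G)=B\cup S$, and it depends essentially on $b\geq 3$: when $b=2$, the equation $d(p_{t-3})+d(p_{t-1})=2+b$ is consistent with both neighbors having degree $2$, so the deduction $d(p_{t-3})=b$ would fail; the hypothesis $\delta(G)=2$ precisely excludes this degenerate range via Lemma \ref{E-29}.
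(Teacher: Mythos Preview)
Your argument tracks the paper's proof closely: parts (i) and (ii) are identical, and for (iii) the paper also propagates outward from a vertex $u$ with $d(u)=b$, showing successively that its first and second neighbors have degree~$2$ and its third neighbors have degree~$b$, then writes ``by regarding $x_i$ as $u$ and repeating this procedure'' to cover the whole graph. Your shortest-path contradiction is just a cleaner formalization of that same iteration.

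One correction, however: the equality case of Lemma~\ref{E-25} as stated only asserts that vertices in $(\bigcup_{v\in N(u)}N(v))\setminus\{u\}$ have degree $\delta(G)$; it says nothing about $N(u)$ itself (a first neighbor $v$ lies in that union only if it is adjacent to another first neighbor). To get $N(u)\subseteq S$ you must apply~(\ref{equ-1}) at $u$ directly: the $b$ neighbors have degree-sum $a\cdot b+b=2b$ and each has degree at least~$2$, so all equal~$2$. This is exactly what the paper does before invoking Lemma~\ref{E-25} for the second neighborhood. The same remark applies to your later sentence ``Every $v\in C_2$ has all $b$ neighbors in $S$ (Lemma~\ref{E-25} again).'' With this one-line fix your proof is complete.
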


\begin{proof}
First we claim that $b\geq 3$, for otherwise $\delta(G)=1$ by Lemma  \ref{E-29}. Let
$v\in V(G)$ and $d(v)=2$. Applying (\ref{equ-1}) to $v$, we have
$2+b=\sum_{u\in N(v)}d(u)\leq 2\Delta(G)$, which gives
$1+\frac{b}{2}\leq\Delta(G)$. On the other hand, from Lemma
\ref{E-25} we have $\Delta(G)\leq \Delta^*(G)=
\frac{b+1+\sqrt{(b-1)^2}}{2}=b$.

If $\Delta(G)=b$, let  $u\in V(G)$, $d(u)=b$ and
$N(u)=\{v_1,v_2,....,v_b\}$. Applying (\ref{equ-1}) to $u$,  we have
$\sum_{i=1}^{b}d(v_i)=2b$, which implies that $d(v_1)=d(v_2)=\cdots
=d(v_b)=2$ since $\delta(G)=2$. Let $N(v_i)=\{u,w_i\}(i=1,2,...,b)$.
Since $d(u)=\Delta^*(G)$, from Lemma \ref{E-25} we have
$d(w_1)=d(w_2)=\cdots =d(w_b)=\delta(G)=2$. Let
$N(w_i)=\{v_i,x_i\}(i=1,2,...,b)$.
 Applying (\ref{equ-1}) to $w_i$, we have $2+b=ad(w_i)+b=d(v_i)+d(x_i)=2+d(x_i)$, which gives $d(x_1)=d(x_2)=\cdots =d(x_b)=b$.
  By regarding $x_i$ as $u$,  and repeating   above procedure, we know that $d(y)=2$ or $b$
   for any $y\in V(G)$, and that   if $d(y)=b$ then $d(z)=2$ for any $z\in N(y)$;
   if $d(y)=2$  then the degrees of the two neighbors of $y$ are $2$ and $b$, respectively. Let
 $$
\begin{array}{ll}C_1=\{u\in V(G)|d(u)=2 \}, \\
C_2=\{u\in V(G)|d(u)=b \}.\\
 \end{array}
$$
Then $\pi:V(G)=C_1\cup C_2$ is a partition of $V(G)$. From  the  above
arguments we know that $|N(u)\cap C_1|=1$ and $|N(u)\cap C_2|=1$ for
any $u\in C_1$, $|N(u)\cap C_1|=b$ and $|N(u)\cap C_2|=0$ for any
$u\in C_2$. Therefore, $\pi:V(G)=C_1\cup C_2$ is a $2$-equitable partition
of $V(G)$  with  parameters $(1,1;b,0)$. By the definition,  $G\in \mathcal {G}_1(1,b)$.
\end{proof}

\begin{figure}
\begin{center}
\begin{picture}(380,88)
\put(30,10){\resizebox{11cm}{!}{\includegraphics[0,0][500,100]{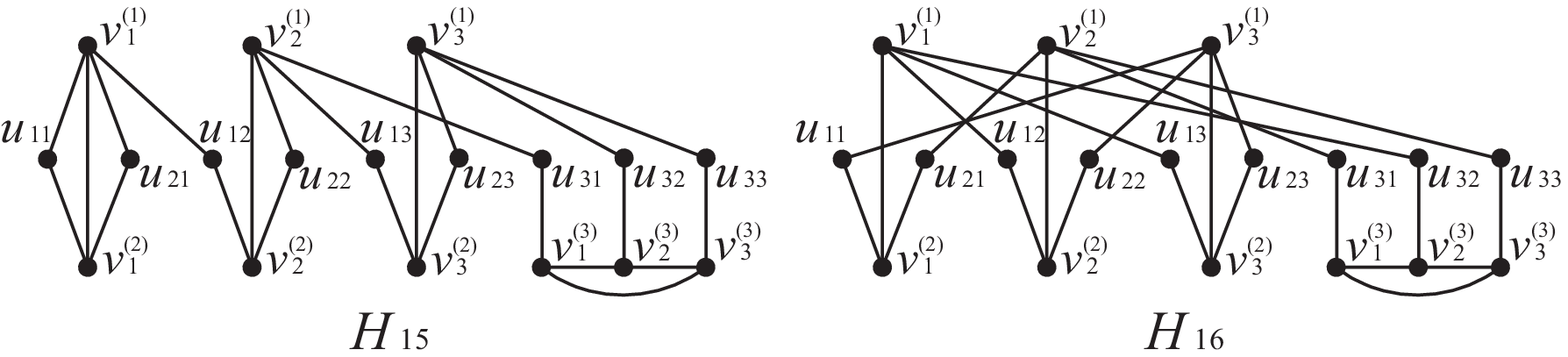}}}
\end{picture}
\end{center}
\vspace{-0.8cm} \caption{ }\label{fig-4} \vspace{-0.2cm}
\end{figure}

Given $t\ge3 $, now we define a   graph $G=(V(G),E(G))$ with a
partition \begin{equation}\label{equ-16-0}V(G)=V_1\cup V_2\cup V_3
\cup V_4, \end{equation}
 where
$V_1=\{v_1^{(1)},v_2^{(1)},...,v_t^{(1)}\}$,
$V_2=\{v_1^{(2)},v_2^{(2)},...,v_t^{(2)}\}$,
$V_3=\{v_1^{(3)},v_2^{(3)},...,v_t^{(3)}\}$ and    $V_4$ has exactly
$3t$ vertices  which are decomposed into three  parts of the same
cardinality: $V_{i4}'=\{u_{i1}',u_{i2}',...,u_{it}'\}$ for
$i=1,2,3$. $G[V_i]$ is a vertex independent set for $i=1,2,4$,
$G[V_3]$ is a
 $2$-regular subgraph. The edges of $G$ between $V_1$ and $V_4$ are defined
to be $$\{v_i^{(1)}u_{ji}'|i=1,2,...,t; j=1,2,3\}. $$ Relabeling the
vertices in $V_4$ arbitrarily such that they are decomposed into
another three parts of the same cardinality:
$V_{i4}=\{u_{i1},u_{i2},...,u_{it}\}$ for $i=1,2,3$. The other edges
of $G$ are defined to be
$$ \left\{\begin{array}{ll}
\mbox{ the edges between $V_1$ and $V_2$: $\{v_i^{(1)}v_i^{(2)}\mid i=1,...,t\}$},\\
\mbox{ the edges between $V_2$ and $V_4$: $\{ v_i^{(2)}u_{1i},  v_i^{(2)}u_{2i}\mid i=1,...,t\}$},\\
\mbox{ the edges between $V_3$ and $V_4$: $\{ v_i^{(3)}u_{3i}\mid i=1,...,t\}$}.\\
\end{array}\right.
$$
We collect such connected graph  $G$ in the set $\mathscr{G}_2(1,5)$. For
example, the graphs $H_{15}$ and $H_{16}$  depicted in Figure
\ref{fig-4} are in $\mathscr{G}_2(1,5)$.

By definition, $G\in \mathscr{G}_2(1,5)$ has vertex partition
(\ref{equ-16-0}),  and  $d(v_i)$ and $\sum_{u\sim v_i}d(u)$ are
constant for any $v_i\in V_i$ (for example, any $v_1\in V_1$ has
four neighbors whose degrees are $2$, $2$, $2$ and $3$,
respectively, and so $\sum_{u\sim v_1}d(u)=3\cdot 2+3=9$). Now we
take $v_i\in V_i$ for $i=1,2,3,4$, and put
$$\left\{\begin{array}{ll}
a\cdot 4+b=ad(v_1)+b=\sum_{u\in N(v_1)}d(u)=3\cdot 2+3=9,\\
a\cdot 3+b=ad(v_2)+b=\sum_{u\in N(v_2)}d(u)=2\cdot 2+4=8,\\
a\cdot 3+b=ad(v_3)+b=\sum_{u\in N(v_3)}d(u)=2\cdot 3+2=8,\\
a\cdot 2+b=ad(v_4)+b=\sum_{u\in N(v_4)}d(u)=3+4=7.\\
\end{array}\right.
$$
The above equation gives an unique solution $a=1, b=5$. It
immediately follows the following result by (\ref{equ-1}).

\begin{lem}\label{36}
 $\mathscr{G}_2(1,5)\subseteq \mathscr{G}(1,5)$.
\end{lem}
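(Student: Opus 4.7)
The plan is to verify directly that every $G\in\mathscr{G}_2(1,5)$ satisfies the defining identity (\ref{equ-1}) with $(a,b)=(1,5)$ for all vertices, and then invoke Theorem A (from the introduction) to conclude that $G$ has exactly two main eigenvalues. Since $G$ is connected by the definition of $\mathscr{G}_2(1,5)$, membership in $\mathscr{G}(1,5)$ follows.

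Concretely, I would first read off the degrees from the construction of $G\in\mathscr{G}_2(1,5)$. Using that $G[V_3]$ is $2$-regular, $G[V_1],G[V_2],G[V_4]$ are independent sets, and that the edges between $V_1$--$V_4$, $V_1$--$V_2$, $V_2$--$V_4$, $V_3$--$V_4$ are the prescribed ones, I obtain $d(v)=4$ for $v\in V_1$, $d(v)=3$ for $v\in V_2\cup V_3$, and $d(v)=2$ for $v\in V_4$ (each $u\in V_4$ is joined to exactly one vertex of $V_1$ and exactly one vertex of $V_2\cup V_3$, via the first/second labelings of $V_4$).

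Next, I would compute $\sum_{u\in N(v)}d(u)$ for a representative vertex in each cell. These are precisely the four sums already displayed in the paragraph preceding the lemma, giving $9, 8, 8, 7$ for vertices in $V_1, V_2, V_3, V_4$ respectively; since each of these equals $d(v)+5$, equation (\ref{equ-1}) is satisfied with $a=1,\ b=5$ at every vertex of $G$. Because $G$ contains vertices of at least two distinct degrees (e.g.\ $2$ and $4$), the linear system derived from (\ref{equ-1}) at two such vertices has a unique solution, so the pair $(1,5)$ is the unique pair for which (\ref{equ-1}) holds on $G$. By Theorem A this forces $G$ to have exactly two main eigenvalues, and together with connectedness this yields $G\in\mathscr{G}(1,5)$.

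There is no serious obstacle here; the only care needed is in the degree bookkeeping for $V_4$, because the two different labelings $\{u_{ji}'\}$ and $\{u_{ij}\}$ of the same $3t$-set must both be used to ensure that each $V_4$-vertex acquires exactly one $V_1$-neighbor and exactly one neighbor in $V_2\cup V_3$, giving the uniform degree $2$. Once this is checked, the rest is the direct substitution sketched above.
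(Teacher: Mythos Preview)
Your proposal is correct and follows essentially the same approach as the paper: the paragraph preceding the lemma already carries out exactly this verification, computing the degrees $4,3,3,2$ on $V_1,V_2,V_3,V_4$, the neighbor sums $9,8,8,7$, and noting that the resulting system forces the unique pair $(a,b)=(1,5)$, whence (\ref{equ-1}) and Theorem~A give $G\in\mathscr{G}(1,5)$. Your added remark about the bookkeeping for the two labelings of $V_4$ is the one point the paper leaves implicit, and it is handled correctly.
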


\begin{figure}[h]
\begin{center}
\begin{picture}(330,98)
\put(38,10){\resizebox{11cm}{!}{\includegraphics[0,0][500,100]{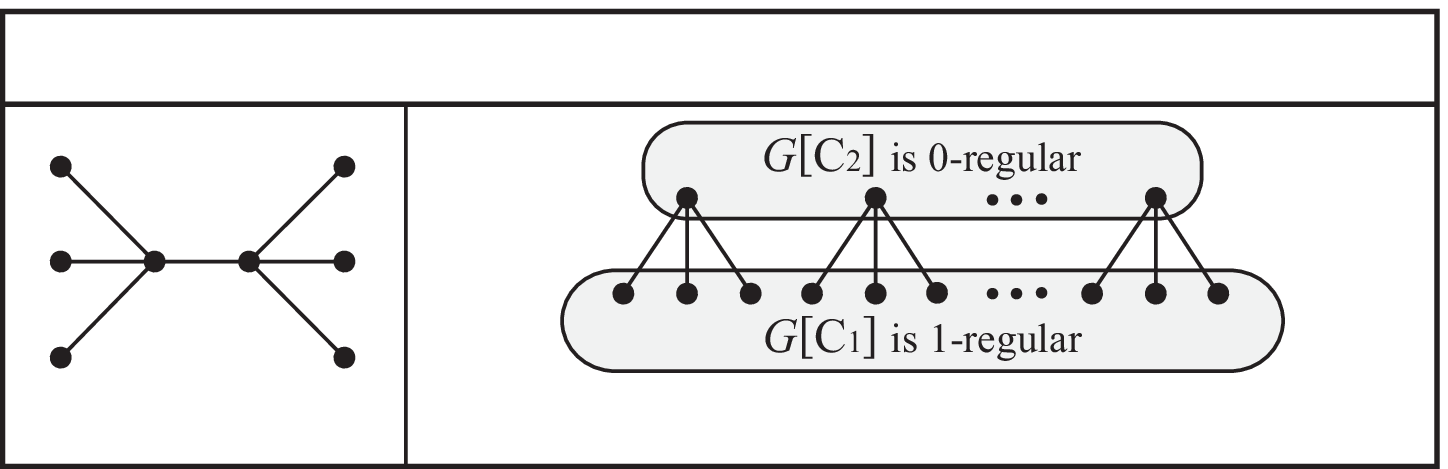}}}

\put(69,16){\small $S_{3,3}$}

\put(125,16){\small We collect such graph $G$ in $\mathcal {G}_1(1,3)$}

\put(90,81.2){\small$\mathscr{G}(1,3)=\mathscr{G}_1(1,3)=\{S_{3,3}\}\cup \mathcal {G}_1(1,3)$}

\end{picture}
\end{center}
\vspace{-1.0cm} \caption{All the graphs in $\mathscr{G}(1,3)$ }\label{fig-5} \vspace{-0.2cm}
\end{figure}

\begin{figure}[h]
\begin{center}
\begin{picture}(550,96)
\put(44,10){\resizebox{11cm}{!}{\includegraphics[0,0][500,100]{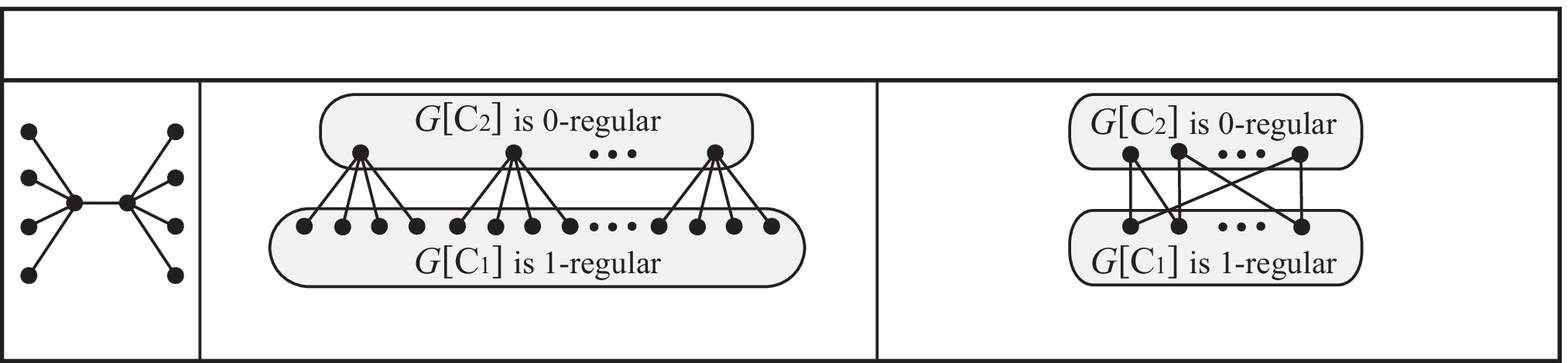}}}

\put(60,16){\small $S_{4,4}$}

\put(92,16){\small We collect such graph $G$ in $\mathcal {G}_1(1,4)\!$}
\put(251,16){\small We collect such graph $G$ in $\mathcal {G}_2(1,4)$}

\put(125,81){\small $\mathscr{G}(1,4)=\mathscr{G}_1(1,4)=\{S_{4,4}\}\cup \mathcal {G}_1(1,4)\cup \mathcal {G}_2(1,4)$}

\end{picture}
\end{center}
\vspace{-1.0cm} \caption{All the graphs in $\mathscr{G}(1,4)$ }\label{fig-6} \vspace{-0.2cm}
\end{figure}

\begin{thm}\label{E-35}
$\mathscr{G}(1,3)=\mathscr{G}_1(1,3)$, $\mathscr{G}(1,4)=\mathscr{G}_1(1,4)$   and $\mathscr{G}(1,5)=\mathscr{G}_1(1,5)\cup \mathscr{G}_2(1,5)$.
\end{thm}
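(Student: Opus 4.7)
The plan is to split each $\mathscr{G}(1,b)$ according to $\delta(G)$. By Lemma \ref{E-29}, $\delta(G)\in\{1,2\}$ for $b\in\{3,4,5\}$. If $\delta(G)=1$, Lemma \ref{E-30} forces $G=S_{b,b}$, and $S_{b,b}$ admits the $2$-equitable partition with parameters $(1,b;1,0)$ (valid since $b\ne 2$), so $S_{b,b}\in\mathscr{G}_1(1,b)$. If $\delta(G)=2$, Lemma \ref{E-33} gives $1+b/2\le\Delta(G)\le b$ and, in the boundary case $\Delta(G)=b$, already places $G$ in $\mathcal{G}_1(1,b)\subseteq\mathscr{G}_1(1,b)$. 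This resolves $b=3$ entirely (only $\Delta=3$ is admissible) and reduces $b=4,5$ to the remaining subcases $(\delta,\Delta)=(2,3)$ and $(2,4)$ respectively.

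For $b=4$ with $\delta=2$ and $\Delta=3$, applying (\ref{equ-1}) at a degree-$3$ vertex forces the neighbor-degree multiset $\{3,2,2\}$ (three neighbors in $\{2,3\}$ summing to $7$), while a degree-$2$ vertex is forced to have multiset $\{3,3\}$ (two neighbors summing to $6$). Taking $C_1,C_2$ to be the sets of degree-$3$ and degree-$2$ vertices then yields a $2$-equitable partition of $G$ with parameters $(1,2;2,0)$, so $G\in\mathcal{G}_2(1,4)\subseteq\mathscr{G}_1(1,4)$.

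For $b=5$ with $\delta=2$ and $\Delta=4$, the same local degree-sum analysis, now with neighbor degrees in $\{2,3,4\}$, shows that every degree-$4$ vertex has exactly one degree-$3$ and three degree-$2$ neighbors; every degree-$2$ vertex has one degree-$3$ and one degree-$4$ neighbor; and degree-$3$ vertices split into Type A with neighbor multiset $\{4,2,2\}$ and Type B with $\{3,3,2\}$. Letting $V_1,V_2,V_3,V_4$ denote the sets of degree-$4$, Type A, Type B, and degree-$2$ vertices, double-counting bichromatic edges gives $|V_1|=|V_2|=|V_3|=:t$, $|V_4|=3t$, a bijection $V_1\leftrightarrow V_2$ through each degree-$4$ vertex's unique degree-$3$ neighbor, and a bijection $V_3\leftrightarrow V_{4,3}\subset V_4$ through each Type B vertex's unique degree-$2$ neighbor; moreover $G[V_3]$ is $2$-regular because the two degree-$3$ neighbors of a Type B vertex must again be Type B (a Type A vertex has no degree-$3$ neighbor), which forces $t\ge 3$. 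The bipartite structure between $V_1,V_2\cup V_3$ and $V_4$ then matches exactly what the construction defining $\mathscr{G}_2(1,5)$ requires: the $V_1$--$V_4$ edges form a disjoint union of $K_{1,3}$'s (yielding the primed decomposition $V_{14}'\cup V_{24}'\cup V_{34}'$ of $V_4$), and the $V_2$--$V_4$ and $V_3$--$V_4$ edges partition $V_4$ into $t$ $V_2$-attached pairs plus the perfect matching $V_3\leftrightarrow V_{4,3}$ (yielding the unprimed decomposition), so $G\in\mathscr{G}_2(1,5)$.

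I expect the main obstacle to lie in the final step for $b=5$: certifying that the local degree-type data really reproduces both prescribed decompositions of $V_4$ in the definition of $\mathscr{G}_2(1,5)$. This is chiefly a relabeling argument once one verifies that the $V_1$--$V_4$ bipartite graph is a union of $K_{1,3}$'s and that the $V_2$-neighbors in $V_4$ partition $V_4\setminus V_{4,3}$ into $t$ disjoint pairs, but the identification must be carried out uniformly so that every such $G$ is captured.
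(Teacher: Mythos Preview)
Your proposal is correct and follows essentially the same approach as the paper's proof: split by $\delta(G)$ via Lemma~\ref{E-29}, dispatch $\delta=1$ with Lemma~\ref{E-30}, handle the $\Delta=b$ boundary with Lemma~\ref{E-33}, and for the remaining subcases $(b,\Delta)=(4,3)$ and $(5,4)$ carry out exactly the local neighbor-degree analysis and double-counting you describe to land in $\mathcal{G}_2(1,4)$ and $\mathscr{G}_2(1,5)$ respectively. The relabeling step for $b=5$ that you flag as the main obstacle is handled in the paper just as you outline, by exhibiting the two decompositions of $V_4$ explicitly.
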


\begin{proof}
First we know that $\mathscr{G}_1(1,b)\subseteq
\mathscr{G}(1,b)$ and $\mathscr{G}_2(1,5)\subseteq \mathscr{G}(1,5)$. Let $G\in \mathscr{G}(1,b)$ and $b\in \{3,4,5\}$. Then
$\delta(G)=1$ or $2$ by Lemma \ref{E-29}. Suppose that $\delta(G)=1$. Then
$G=S_{b,b}\in \mathscr{G}_1(1,b)$ by Lemma \ref{E-30}.
 Suppose that $\delta(G)=2$. Then $1+\frac{b}{2}\leq \Delta(G)\leq b$  by Lemma \ref{E-33}.
 If $\Delta(G)=b$,  then $G\in \mathcal {G}_1(1,b)\subseteq \mathscr{G}_1(1,b)$ again by
Lemma \ref{E-33}. If $1+\frac{b}{2}\leq \Delta(G)<b$, then $b\neq 3$ and \begin{math}
 \left\{
  \begin{array}{ccc}
      b&\!\!\!=\!\!\!&4\\
    \Delta(G)&\!\!\!=\!\!\!&3\\
  \end{array}\right.
\end{math}
or  \begin{math}
 \left\{
  \begin{array}{ccc}
      b&\!\!\!=\!\!\!&5\\
    \Delta(G)&\!\!\!=\!\!\!&4\\
  \end{array}.\right.
\end{math}
Therefore, $G=S_{3,3}$ or $G\in \mathcal {G}_1(1,3)$ if $b=3$, that is
$\mathscr{G}(1,3)\subseteq \mathscr{G}_1(1,3)$, and so $\mathscr{G}(1,3)=\mathscr{G}_1(1,3)$(see Figure \ref{fig-5}).

If \begin{math}
 \left\{
  \begin{array}{ccc}
      b&\!\!\!=\!\!\!&4\\
    \Delta(G)&\!\!\!=\!\!\!&3\\
  \end{array}\right.
\end{math}, note that $\delta(G)=2$, let
$u,v\in V(G)$, $N(u)=\{x_1,x_2\}$ and $N(v)=\{y_1,y_2,y_3\}$.
Applying (\ref{equ-1}) to $u$, we have $d(x_1)+d(x_2)=6$, which
implies that $(d(x_1),d(x_2))=(3,3)$. Applying (\ref{equ-1}) to $v$,
we have $d(y_1)+d(y_2)+d(y_3)=7$, which implies that
$(d(y_1),d(y_2),d(y_3))=(2,2,3)$. Let
 $$
\begin{array}{ll}C_1=\{u\in V(G)| d(u)=3 \}, \\
C_2=\{u\in V(G)|d(u)=2 \}.\\
 \end{array}
$$
Then $\pi:V(G)=C_1\cup C_2$ is a partition of $V(G)$. From the above
arguments we know that $|N(u)\cap C_1|=1$ and $|N(u)\cap C_2|=2$ for
any $u\in C_1$, $|N(u)\cap C_1|=2$ and $|N(u)\cap C_2|=0$   for any
$u\in C_2$. Therefore, $\pi:V(G)=C_1\cup C_2$ is a $2$-equitable partition
of $V(G)$  with  parameters $(1,2;2,0)$.  By the definition,  $G\in \mathcal {G}_2(1,4)\subseteq\mathscr{G}_1(1,4)$.
Combining the arguments in the first  paragraph, we know that  $G=S_{4,4}$ or $G\in \mathcal {G}_1(1,4)$ or $G\in \mathcal {G}_2(1,4)$ if $b=4$, that is $\mathscr{G}(1,4)\subseteq \mathscr{G}_1(1,4)$, and so $\mathscr{G}(1,4)=\mathscr{G}_1(1,4)$(see Figure \ref{fig-6}).

If \begin{math}
 \left\{
  \begin{array}{ccc}
      b&\!\!\!=\!\!\!&5\\
    \Delta(G)&\!\!\!=\!\!\!&4\\
  \end{array}\right.
\end{math},  note that $\delta(G)=2$, let $u,v\in V(G)$,
$N(u)=\{x_1,x_2\}$ and $N(v)=\{y_1,y_2,y_3,y_4\}$.
 Applying (\ref{equ-1}) to $u$, we have $d(x_1)+d(x_2)=7$.
 Since $\Delta(G)=4$, we have $(d(x_1),d(x_2))=(3,4)$.
 Applying (\ref{equ-1}) to $v$, we have $d(y_1)+d(y_2)+d(y_3)+d(y_4)=9$. Since $\delta(G)=2$,
we have $(d(y_1),d(y_2),d(y_3),d(y_4))=(2,2,2,3)$. Obviously, there
is a
  vertex $w\in V(G)$ with $d(w)=3$. Let $N(w)=\{z_1,z_2,z_3\}$. Applying (\ref{equ-1})
  to $w$,  we have
$d(z_1)+d(z_2)+d(z_3)=8$. Since $\delta(G)=2$, we have
$(d(z_1),d(z_2),d(z_3))=(2,2,4)$ or $(2,3,3)$. Let
\begin{equation}\label{equ-15}
\begin{array}{ll}V_1=\{v\in V(G)|d(v)=4,\mbox{and the degrees of the neighbors of $v$ are 2,2,2 and 3} \}, \\
V_2=\{v\in V(G)|d(v)=3,\mbox{and the degrees of the neighbors of $v$ are 2,2  and 4}\},\\
V_3=\{v\in V(G)|d(v)=3, \mbox{and the degrees of the neighbors of
$v$ are
2,3 and 3}\},\\
V_4=\{v\in V(G)|d(v)=2, \mbox{and the degrees of the neighbors of
$v$ are 3
and 4} \}.\\
 \end{array}
\end{equation}
Then $V_1\cup V_2\cup V_3 \cup V_4$ is a partition of $V(G)$. For convenience, if
$|N(v)\cap V_j|$ is a constant for any $v\in V_i$, then set
$r_{ij}=|N(v)\cap V_j|$. Note that  if $r_{ij}=0$ then $r_{ji}=0$,
and that $\sum_{j=1}^{4}|N(v)\cap V_j|=d(v)$ for any $v\in V(G)$,
combining (\ref{equ-15}) we have
$$
\left\{
\begin{array}{llll}r_{11}=0,& r_{12}=1,&r_{13}=0,&r_{14}=3,\\
r_{21}=1,&r_{22}=0,&r_{23}=0,&r_{24}=2,\\
r_{31}=0,&r_{32}=0,&r_{33}=2,&r_{34}=1,\\
r_{41}=1,&&&r_{44}=0.
\end{array}
\right.$$ Since $r_{ii}=0 (i=1,2,4)$, $G[V_i]$ is a vertex
independent set for $i=1,2,4$. Since $r_{33}=2$,  $G[V_3]$ is a
 $2$-regular subgraph. Let  $|V_1|=t$.  Since
$|V_1|r_{12}=|V_2|r_{21}$,
 we have $|V_2|=t$. Since
$|V_1|r_{14}=|V_4|r_{41}$,  we have $|V_4|=3t$.
  Note that $ |N(v)\cap V_2|+ |N(v)\cap V_3|=1$ for any $v\in V_4$,
  we have $|V_4|=|V_2|r_{24}+|V_3|r_{34}$, which gives $|V_3|=t$.
Since $G[V_3]$ is a
 $2$-regular subgraph, we have
$t\geq 3$.

Let $V_1=\{v_1^{(1)},v_2^{(1)},...,v_t^{(1)}\}$,
$V_2=\{v_1^{(2)},v_2^{(2)},...,v_t^{(2)}\}$,
$V_3=\{v_1^{(3)},v_2^{(3)},...,v_t^{(3)}\}$ and    $V_4=\cup
_{i=1}^3V_{i4}'$, where
$V_{i4}'=\{u_{i1}',u_{i2}',...,u_{it}'\}(i=1,2,3)$. Since $r_{12}=1$
and $r_{21}=1$, without loss of generality,  let the edges of $G$
between $V_1$ and $V_2$ are
$$\{v_i^{(1)}v_i^{(2)}\mid i=1,...,t\}.$$
Since $r_{41}=1$ and $r_{14}=3$, without loss of generality,  let
the edges of $G$ between $V_1$ and $V_4$ are
$$\{v_i^{(1)}u_{ji}'|i=1,2,...,t; j=1,2,3\}. $$
Note that  $r_{24}=2$, $r_{34}=1$ and
 $ |N(v)\cap V_2|+ |N(v)\cap V_3|=1$ for any $v\in V_4$, we can relabeling the vertices in
$V_4$  such that $V_4=\cup _{i=1}^3V_{i4}$, where
$V_{i4}=\{u_{i1},u_{i2},...,u_{it}\}(i=1,2,3)$, and the other edges
of $G$ are
$$ \left\{\begin{array}{ll}
\mbox{ the edges between $V_2$ and $V_4$: $\{ v_i^{(2)}u_{1i},  v_i^{(2)}u_{2i}\mid i=1,...,t\}$},\\
\mbox{ the edges between $V_3$ and $V_4$: $\{ v_i^{(3)}u_{3i}\mid i=1,...,t\}$}.\\
\end{array}\right.
$$
  By the definition,
$G\in \mathscr{G}_2(1,5)$.
Combining the arguments in the first  paragraph, we know that $G=S_{5,5}$ or $G\in \mathcal {G}_1(1,5)$ or $G\in \mathscr{G}_2(1,5)$ if $b=5$,  that is $\mathscr{G}(1,5)\subseteq \mathscr{G}_1(1,5)\cup \mathscr{G}_2(1,5)$, and so $\mathscr{G}(1,5)=\mathscr{G}_1(1,5)\cup \mathscr{G}_2(1,5)$(see Figure \ref{fig-7}).
\end{proof}

\begin{figure}
\begin{center}
\begin{picture}(550,136)
\put(36,10){\resizebox{11cm}{!}{\includegraphics[0,0][500,100]{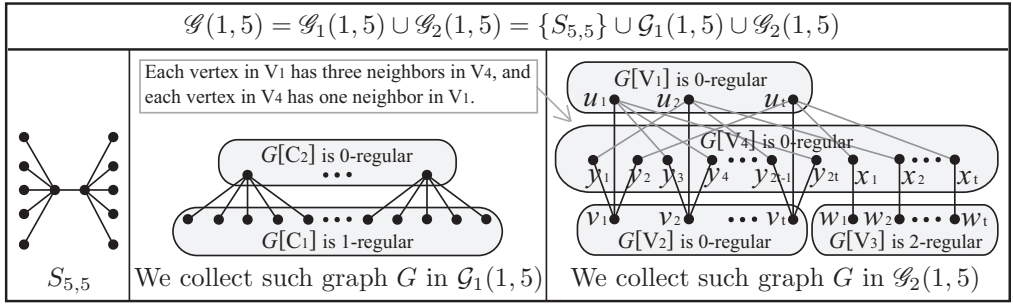}}}

\put(52,16){\small $S_{5,5}$}

\put(85,16){\small We collect such graph $G$ in $\mathcal {G}_1(1,5)\!$}
\put(250,16){\small We collect such graph $G$ in $\mathscr{G}_2(1,5)$}

\put(104,112){\small $\mathscr{G}(1,5)=\mathscr{G}_1(1,5)\cup\mathscr{G}_2(1,5)=\{S_{5,5}\}\cup \mathcal {G}_1(1,5)\cup \mathscr{G}_2(1,5)$}

\end{picture}
\end{center}
\vspace{-1.0cm} \caption{All the graphs in $\mathscr{G}(1,5)$ }\label{fig-7} \vspace{-0.2cm}
\end{figure}

At the last of this section we propose a problem to characterize the graphs in
$\mathscr{G}(a,b)$ for other feasible pairs $(a,b)$.

{\footnotesize }

%\end{CJK*}
\end{document}